\newtheorem{theor}{Theorem}
\theoremstyle{definition}
\newtheorem*{convention}{Convention}
\newtheorem{proposition}[theor]{Proposition}
\newtheorem{define}{Definition}
\newtheorem{problem}{Problem}
\newtheorem{open}[problem]{Open problem}
\theoremstyle{remark}
\newtheorem{rem}{Remark}
\newtheorem*{notation}{Notation}
\newcommand{\BBR}{\mathbb{R}}
\newcommand{\dd}{\partial}
\newcommand{\schouten}[1]{\lshad {#1} \rshad}
\DeclareMathOperator{\Assoc}{Assoc}
\newcommand{\by}[1]{\textup{{#1}}}
\newcommand{\jour}[1]{\textit{{#1}}}
\newcommand{\vol}[1]{\textbf{{#1}}}
\begin{document}
\title{Associativity certificates for Kontsevich's star\/-\/product
$\star$~mod~$\bar{o}(\hbar^k)$: $k\leqslant6$ unlike~$k\geqslant7$}

\author{Ricardo Buring${}^{*,}$\footnote[1]{Present address: Centre INRIA de Saclay \^Ile\/-\/de\/-\/France, B\^at.~Alan Turing, 1~rue Honor\'e\ d'Estienne d'Orves, 
F-91120 Palaiseau, France} 
   and Arthemy V Kiselev${}^\S$}

\address{${}^{*}$ Institut f\"ur Mathematik, 
Johannes Gutenberg\/--\/Uni\-ver\-si\-t\"at,
Staudingerweg~9, 
\mbox{D-\/55128} Mainz, Germany}

\address{${}^{\S}$ Ber\-nou\-lli Institute for Mathematics, Computer Science and Artificial Intelligence, University of Groningen, P.O.~Box 407, 9700~AK Groningen, The Netherlands}

\ead{A.V.Kiselev@rug.nl}

\begin{abstract}
The formula $\star$ mod~$\bar{o}(\hbar^k)$ of Kontsevich's star\/-\/product with harmonic propagators was known in full at~$\hbar^{k\leqslant6}$ since 2018 for generic Poisson brackets, and since 2022 also at~$k=7$ for affine brackets.
We discover that the mechanism of associativity for the star\/-\/product up to $\bar{o}(\hbar^6)$ is different from the mechanism at order~$7$
for both the full star\/-\/product and the affine star\/-\/product.
Namely, at lower orders the needed consequences of the Jacobi identity are immediately obtained from the associator mod~$\bar{o}(\hbar^6)$, whereas at order~$\hbar^7$ and higher, some of the necessary differential consequences are reached from the Kontsevich graphs in the associator in strictly more than one step.
\end{abstract}

\noindent\textbf{Introduction.}\quad
Deformation quantization extends the commutative associative unital product~$\times$ in the algebra $A\mathrel{{:}{=}} C^\infty(M)$ of smooth functions on a manifold~$M$ to an associative product~$\star$ on the space of formal power series~$A[[\hbar]]$; the skew\/-\/symmetric part of the bi\/-\/derivation in the leading deformation term at~$\hbar^1$ in~$\star$ is readily seen to be a Poisson bracket $\{{\cdot},{\cdot}\}_P$ on the algebra~$A$.

\begin{theor}[\cite{MK97}]
For every Poisson bi\/-\/vector $P$ on a finite\/-\/dimensional affine real manifold $M$ and an infinitesimal deformation 
$\times \mapsto \times + \hbar\,\{{\cdot},{\cdot}\}_P + \bar{o}(\hbar)$ 
towards the respective Poisson bracket, 
there exists a system of weights $w(\Gamma)$, 
uniformly given by an integral formula, 
such that the $\BBR[[\hbar]]$-\/bilinear 
star\/-\/product,
\begin{equation}\label{EqStarMK}
\star = \times + \sum
_{n\geqslant1} \frac{\hbar^n}{n!} \sum
_{\Gamma\in\hat{G}^n_2}  w(\Gamma) \cdot \Gamma(P,\ldots,P )(\cdot,\cdot),
\end{equation}
is associative\textup{;}
here $\hat{G}^n_m \subset G^n_m$ is the subset of Kontsevich graphs built of wedges \textup{(}with each aerial vertex having exactly two outgoing edges\textup{)} 
in the set~$G^n_m$ of all Formality graphs with $m$~ground vertices and $n$~aerial vertices.\footnote{\label{FootDefFormalityGraph}%
By definition, a \emph{Formality graph} is a simple directed graph (that is, without double edges and without tadpoles) on $m + n$ vertices $\{ 0$, $\ldots$, $m-1$, $m$, $\ldots$, $m+n-1\}$, such that the $m$ ground vertices $0$, $\ldots$, $m-1$ are sinks (with no outgoing edges) and the $n$ vertices $m$, $\ldots$, $m+n-1$ are called \emph{aerial}. 
The set of outgoing edges at each vertex 
is endowed with a total ordering: Left${}\prec{}$Right, 
Left${}\prec{}$Middle${}\prec{}$Right,~etc.}
\end{theor}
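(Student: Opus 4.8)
\quad The plan is to follow Kontsevich's construction: realise each weight $w(\Gamma)$ as the integral of a product of closed $1$\/-\/forms over a compactified configuration space, and then deduce associativity of~$\star$ from Stokes' theorem together with the Jacobi identity $\Jac(P)=0$. First I would introduce, for integers $n,m\geqslant 0$ with $2n+m\geqslant 2$, the configuration space $C_{n,m}$ of $n$ pairwise distinct points in the open upper half\/-\/plane~$\mathbb{H}$ and $m$ pairwise distinct, linearly ordered points on $\partial\mathbb{H}=\BBR$, taken modulo the free proper action $z\mapsto az+b$ with $a>0$ and $b\in\BBR$; it is a smooth oriented manifold of dimension $2n+m-2$. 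Following Fulton--MacPherson and Kontsevich, I would compactify it to a manifold with corners $\overline{C}_{n,m}$ by iteratively blowing up the loci where clusters of points collide, so that the codimension\/-\/one boundary $\partial\overline{C}_{n,m}$ becomes a union of strata $\partial_S$ labelled by the colliding subset $S$ of vertices: either $S$ is a set of $\geqslant 2$ aerial vertices shrinking to an interior point, with $\partial_S\cong\overline{C}_{|S|,0}\times\overline{C}_{n-|S|+1,\,m}$, or $S$ contains a consecutive arc of ground vertices together with some aerial vertices, shrinking to a point of~$\BBR$, again giving a product of two smaller $\overline{C}$'s. The propagator is the hyperbolic angle $\phi(p,q)$ seen from $p$ between the geodesic ray $p\to i\infty$ and the geodesic through $p,q$ (so, up to normalisation, $\phi(p,q)=\arg\bigl((q-p)/(q-\bar{p})\bigr)$); it extends smoothly to~$\overline{C}_{2,0}$, and for every edge $e=(i\to j)$ of a Kontsevich graph its pullback along the forgetful map $\overline{C}_{n,m}\to\overline{C}_{2,0}$ retaining only the two endpoints of~$e$ is a smooth closed $1$\/-\/form $\mathrm{d}\phi_e$ on~$\overline{C}_{n,m}$.

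With this in hand, for $\Gamma\in\hat{G}^n_2$ --- so $|E_\Gamma|=2n=\dim C_{n,2}$ --- I would \emph{define}
\[
w(\Gamma)=\frac{1}{(2\pi)^{2n}}\int_{\overline{C}_{n,2}}\ \bigwedge_{e\in E_\Gamma}\mathrm{d}\phi_e ,
\]
the edges wedged in the order prescribed by the Left${}\prec{}$Right ordering at each aerial vertex; smoothness of the integrand on the compact manifold $\overline{C}_{n,2}$ makes this integral absolutely convergent and independent of the slice chosen for the $az+b$\/-\/action. The operator $\Gamma(P,\ldots,P)(f,g)$ is assembled in the standard way --- a copy of $P$ at each aerial vertex, $f$ and $g$ at the two ground sinks, each edge a contraction of one index against a partial derivative of its target, the two indices of each $P$ distributed to its outgoing edges by Left${}\prec{}$Right --- and it is a bidifferential operator. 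Hence \eqref{EqStarMK} is a well\/-\/defined $\BBR[[\hbar]]$\/-\/bilinear map on $A[[\hbar]]$; and at order~$\hbar^1$ the only admissible wedge of nonzero weight is $0\leftarrow\bullet\to 1$, of normalised weight~$1$, so indeed $\star=\times+\hbar\{{\cdot},{\cdot}\}_P+\bar{o}(\hbar)$.

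It remains to prove associativity. I would expand the associator $\Assoc(f,g,h)=(f\star g)\star h-f\star(g\star h)$ in powers of~$\hbar$: its coefficient at $\hbar^N$ is a finite sum $\sum_\Gamma c_\Gamma\,\Gamma(P,\ldots,P)(f,g,h)$ over Kontsevich graphs $\Gamma$ with $N$ aerial and $3$ ground vertices and $|E_\Gamma|=2N$, the universal scalars $c_\Gamma$ being combinations of products $w(\Gamma')\,w(\Gamma'')$. For each such $\Gamma$ the form $\Omega_\Gamma=\bigwedge_{e\in E_\Gamma}\mathrm{d}\phi_e$ has degree $2N=\dim C_{N,3}-1$ and is closed, so Stokes' theorem on $\overline{C}_{N,3}$ forces the sum of its integrals over all codimension\/-\/one strata of the boundary to vanish. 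Sorting these contributions by the classification above: the strata in which exactly two aerial vertices collide at an interior point yield terms in which the Schouten square $\Jac(P)$ enters as an argument, and these drop out because $\Jac(P)=0$; the strata in which three or more aerial vertices collide, together with a short list of further degenerate strata, contribute nothing, by vanishing lemmas (dimension counts on the smaller spaces $\overline{C}_{k,0}$ and a reflection\/-\/symmetry argument on~$\mathbb{H}$); and the strata in which an arc of ground vertices collapses onto $\BBR$ reassemble into precisely the Hochschild\/-\/coboundary and Gerstenhaber\/-\/bracket terms that make up~$\Assoc$. Consequently every $\hbar^N$\/-\/coefficient is forced to vanish, i.e.\ $\star$ is associative over~$\BBR^d$; and since \eqref{EqStarMK} is written using only $P$ and partial derivatives, and is invariant under affine changes of coordinates, the same formula patches together on any finite\/-\/dimensional affine real manifold~$M$. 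Equivalently, this is the assertion that Kontsevich's Formality $L_\infty$\/-\/morphism, evaluated on $P^{\otimes N}$ under the hypothesis $\Jac(P)=0$, produces a solution of the Maurer--Cartan equation in the Hochschild complex of~$A$.

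The step I expect to be the genuine obstacle --- and where essentially all of the work sits --- is the analytic and combinatorial package underpinning the Stokes argument: proving that each $\mathrm{d}\phi_e$ extends smoothly across the Kontsevich compactification (so that the weights converge and Stokes is legitimate), establishing the vanishing lemmas for the ``bad'' boundary strata of $\overline{C}_{N,3}$ --- the delicate cases being a single aerial vertex limiting onto $\BBR$, collisions of three or more aerial points, and the involution argument that excludes an edge between the two extreme ground vertices --- and keeping track of every orientation sign (of $C_{n,m}$, of its boundary strata, and of the edge\/-\/wedge ordering) so that the surviving contributions recombine with exactly the coefficients occurring in $\Assoc$. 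Granted these ingredients, the implication ``$\Jac(P)=0\Rightarrow\star$ associative'' is purely formal.
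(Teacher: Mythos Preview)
The paper does not prove this theorem at all: it is quoted from~\cite{MK97} as background, and the body of the paper is concerned with \emph{verifying} associativity computationally at specific orders~$\hbar^k$ via Leibniz\/-\/graph factorizations of the associator, not with reproving Kontsevich's general result. Your outline is a faithful sketch of Kontsevich's original argument in~\cite{MK97} --- configuration spaces, Fulton--MacPherson compactification, the harmonic\/-\/angle propagator, Stokes on $\overline{C}_{N,3}$, the boundary\/-\/strata classification, and the vanishing lemmas --- so in that sense you have reproduced exactly the proof the paper defers to by citation.

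One point of contrast worth noting: the paper's own perspective on associativity (Proposition~\ref{PropFactorAssocWithCn} and Eq.~\eqref{EqFactorAssoc}) is the \emph{Formality\/-\/morphism} reformulation you mention only in passing at the end --- namely, that $\Assoc(\star)(P)=\lozenge(P,\schouten{P,P})$ for an explicit operator $\lozenge$ built from the higher components~$\mathcal{F}_{n-1}$ of the $L_\infty$\/-\/morphism, so that $\schouten{P,P}=0$ kills the associator outright. This packaging is equivalent to what you wrote (the ``two aerial vertices collide'' strata produce the $\schouten{P,P}$ insertions), but it is the language the paper actually works in, since the Leibniz graphs it manipulates are precisely the graphs encoding the differential consequences of $\schouten{P,P}=0$ that appear in~$\lozenge$.
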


\begin{convention}
To the edges $L$ and $R$ of the wedge graph $\Lambda$ we ascribe independent indices $i$ and $j$ respectively, and with this graph $\Lambda$ we associate the operator $\Lambda(P)(f,g)=P^{ij}\cdot\dd_i f\cdot\dd_j g$ which is the Poisson bracket. 
More generally, for a Formality graph $\Gamma \in G^n_m$ we ascribe independent indices to all edges; the multi\/-\/linear multi\/-\/differential operator $\Gamma(J_0$, $\ldots$, $J_{n-1})(f_0$, $\ldots$, $f_{m-1})$ associated with the graph $\Gamma$ is then a sum over those indices, with each summand being a product over the (differentiated)
contents of vertices, the ground vertex $k$ containing the argument $f_k$ of the operator, and the aerial vertex $m + \ell$ containing the component of the multi\/-\/vector field $J_\ell$ specified
by indices of the ordered outgoing edges; here the content of each vertex is differentiated
with respect to the local affine coordinates specified by the incoming edges (if~any).
\end{convention}

Elementary properties of the graph weights $w(\Gamma)$
are summarized in~\cite[Lemmas~1--5 and Remark~8]{cpp}; 
the Shoikhet\/--\/Felder\/--\/Willwacher cyclic weight relations from~\cite[App.~E]{FelderWillwacher} are recalled in~\cite[Proposition~7]{cpp}. (These relations are not enough to determine the weights completely.)
Another ample source of relations between weights is 
the associativity of~$\star$;
this can be exploited as in~\cite[Methods~1--3]{cpp}. 

The Kontsevich star\/-\/product with harmonic propagators (as in~\cite{MK97}) was known at orders $\hbar^1$, $\ldots$, $\hbar^4$ in~2017 from~\cite{cpp}. The weights $w(\Gamma)$ of all Kontsevich graphs at $\hbar^5$ and~$\hbar^6$ in~$\star$ were obtained by the end of~2018 in~\cite{BPP}; the Riemann zeta value $\zeta(3)^2 / \pi^6$ starts appearing in the weights $w(\Gamma)$ for some Kontsevich graphs $\Gamma \in \smash{\hat{G}^n_2}$ from $n=6$ onwards.
In~\cite{affine7} from~2022 we found the weights $w(\Gamma)$ of all Kontsevich graphs $\Gamma \in \smash{\hat{G}^{n=7}_{m=2}}$ with in-degree${}\leqslant 1$ of aerial vertices, that is the weights of the graphs which are relevant for the case of \emph{affine} Poisson brackets $\{\cdot,\cdot\}_P$ and thus, as~$n\leqslant 7$, for the seventh order expansion $\star_{\text{aff}}$ mod~$\bar{o}(\hbar^7)$ of the affine star\/-\/product.
We then establish in~\cite{affine7} that the entire coefficient of~$\zeta(3)^2 / \pi^6$, which does show up in~$\star_{\text{aff}}$ at $\hbar^6$ and~$\hbar^7$, equals a linear combination of differential consequences of the Jacobi identity (for affine Poisson brackets $\{\cdot,\cdot\}_P$) because the respective linear combination of Kontsevich graphs near~$\zeta(3)^2 / \pi^6$  assimilates into a linear combination of \emph{Leibniz} graphs on $m=2$ ground vertices and $\tilde{n}=n-1$ aerial vertices.

\begin{define}
A \emph{Leibniz graph} is a Formality graph containing at least one aerial vertex with three outgoing edges, such that those three edges have three distinct targets, and none of those three edges are tadpoles. The other aerial vertices (if any) have two outgoing edges, and the ground vertices are as usual. These graphs will be evaluated with the \emph{Jacobiator} $\tfrac{1}{2}\lshad P,P \rshad$ of the Poisson structure~$P$ in the vertex with three outgoing edges, hence representing a differential operator that is identically zero whenever $P$~is Poisson.\footnote{%
Homogeneous components of differential consequences of the Jacobi identity (now realized by using Leibniz graphs) vanish separately thanks to the following lemma: 
A tri\/-\/differential operator $\sum_{ |I|,|J|,|K|\geqslant 0} c^{IJK} \dd_I \otimes \dd_J \otimes \dd_K$ vanishes identically iff all its coefficients vanish: $c^{IJK} = 0$ for every triple $(I, J, K)$ of multi\/-\/indices;
here $\dd_L = \dd_1^{\alpha_1} \circ \cdots \circ \dd_n^{\alpha_n}$ for a multi\/-\/index $L = (\alpha_1, \ldots, \alpha_n )$. Moreover, the sums 
$\sum_{|I|=i,|J|=j,|K|=k} c^{IJK} \dd_I \otimes \dd_J \otimes \dd_K$
are then zero for all homogeneity orders~$(i,j,k)$.}
\end{define}

We recall from the breakthrough paper~\cite{MK97} the guaranteed existence of a factorization of the star\/-\/product associator,
$
\Assoc(\star)(P)(f,g,h) = (f\star g)\star h - f\star(g\star h)$
with $f,g,h \in A[[\hbar]]$, 
via Leibniz graphs (here $M$~is an affine real manifold of finite dimension~$d$).

\begin{proposition}[Corollary~4 and Conjecture ending~\S4 in~\cite{kiev19}]\label{PropFactorAssocWithCn}
The operator $\Diamond$ that solves the factorization problem
\begin{equation}\label{EqFactorAssoc}
\Assoc(\star)(P)(f, g, h) = \lozenge \bigl( P, \schouten{P, P}\bigr) (f, g, h),
 \qquad f,g,h\in A[[\hbar]],
\end{equation}
is given by
\begin{equation}\label{EqDiamond}
\lozenge = 2 \cdot \sum\nolimits_{n\geqslant 1} \frac{\hbar^n}{n!} \cdot
 c_n \cdot \mathcal{F}_{n-1} \bigl( \schouten{P, P}, P, \ldots, P\bigr),
\end{equation}
where $\mathcal{F}_k$~is the $k$-\/ary component of the Formality $L_\infty$-\/morphism, and where we claim that the constants~$c_n$ are equal to~$n/6$.
\end{proposition}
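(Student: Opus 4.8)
The plan is to obtain both the shape of $\lozenge$ in~\eqref{EqDiamond} and the value $c_n = n/6$ from Kontsevich's Formality theorem: the Taylor components $\mathcal{F}_k$ assemble into an $L_\infty$-morphism $\mathcal{F}\colon T_{\mathrm{poly}}(M)\to D_{\mathrm{poly}}(M)$ from polyvector fields to polydifferential operators, and the star product is the image under $\mathcal{F}$ of the Poisson bivector, $\star = {\times} + \mathcal{F}_{*}(\hbar P)$ with $\mathcal{F}_{*}(\gamma) = \sum_{k\geqslant 1}\tfrac{1}{k!}\,\mathcal{F}_k(\gamma,\ldots,\gamma)$, in agreement with~\eqref{EqStarMK}. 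First I would recall the elementary fact that a polydifferential perturbation $\mu$ of $\times$ defines an associative product exactly when $\mu$ is a Maurer--Cartan element of the Hochschild DGLA, $\mathrm{d}\mu + \tfrac12[\mu,\mu] = 0$, where $\mathrm{d} = [{\times},{\cdot}]$ and $[{\cdot},{\cdot}]$ is the Gerstenhaber bracket. Hence, with the sign built into the definition of $\Assoc$,
\[
 \Assoc(\star)(P)(f,g,h) = \mathrm{d}\,\mathcal{F}_{*}(\hbar P) + \tfrac12\bigl[\mathcal{F}_{*}(\hbar P),\,\mathcal{F}_{*}(\hbar P)\bigr].
\]

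Next I would invoke the universal identity expressing the Maurer--Cartan curvature of the image of $\gamma$ through the image of the curvature of $\gamma$ --- the infinitesimal form of the statement that an $L_\infty$-morphism carries Maurer--Cartan elements to Maurer--Cartan elements: for every degree-one $\gamma$ in $T_{\mathrm{poly}}(M)[[\hbar]]$,
\[
 \mathrm{d}\,\mathcal{F}_{*}(\gamma) + \tfrac12\bigl[\mathcal{F}_{*}(\gamma),\mathcal{F}_{*}(\gamma)\bigr] = \sum_{k\geqslant 1}\frac{1}{(k-1)!}\,\mathcal{F}_k\bigl(\mathrm{d}\gamma + \tfrac12[\gamma,\gamma]_{\mathrm{SN}},\,\gamma,\ldots,\gamma\bigr).
\]
In $T_{\mathrm{poly}}(M)$ the differential is zero, so for $\gamma = \hbar P$ only the Schouten square $\tfrac12[\hbar P,\hbar P]_{\mathrm{SN}} = \tfrac12\hbar^{2}\schouten{P,P}$ survives as an input, and by multilinearity
\[
 \Assoc(\star)(P) = \sum_{k\geqslant 1}\frac{\hbar^{\,k+1}}{2\,(k-1)!}\,\mathcal{F}_k\bigl(\schouten{P,P},\,P,\ldots,P\bigr),
\]
which, re-indexed by $n = k+1$, has exactly the shape~\eqref{EqDiamond}. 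Moreover each summand $\mathcal{F}_{n-1}(\schouten{P,P},P,\ldots,P)$ is a sum of Kontsevich graphs one of whose aerial vertices carries the trivector $\schouten{P,P}$ --- three outgoing edges, three distinct targets, no tadpole, since Formality graphs have neither double edges nor tadpoles --- i.e.\ a sum of Leibniz graphs in the sense of the Definition above, with the trivalent vertex filled by $\tfrac12\schouten{P,P}$ up to the overall factor $2$. This reproduces the Leibniz-graph factorization of $\Assoc(\star)$ whose existence is guaranteed by~\cite{MK97}.

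What remains is to pin down the constant. Comparing the coefficient obtained above with the coefficient $2\hbar^{n}c_n/n!$ in~\eqref{EqDiamond} determines $c_n$, but only after a careful reconciliation of normalizations: one must fix the exact normalization of the Taylor components $\mathcal{F}_k$ underlying~\eqref{EqStarMK} (the $1/k!$ prefactor, the ordering of the inputs, the Koszul signs attached to the odd-degree input $\schouten{P,P}$) and match it with the convention under which a trivalent Formality-graph vertex filled by $\tfrac12\schouten{P,P}$ evaluates to the genuine Jacobiator $\{\{\cdot,\cdot\}_P,\cdot\}_P + \circlearrowright$ of $P$; the content of the Proposition is that, after this reconciliation, $c_n = n/6$. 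I expect this bookkeeping --- lining up every factorial, symmetry factor and sign --- to be the main obstacle, and it is the more delicate because~\eqref{EqFactorAssoc} has no unique solution: one may add to $\lozenge$ any combination of Leibniz graphs that vanishes identically when $P$ is Poisson (via the linear relations among Leibniz graphs), so the claim is not about an arbitrary $\lozenge$ but about the canonical one delivered by the Formality morphism. The robust check --- and the reason the cited source presents $c_n = n/6$ as a conjecture for general $n$ rather than a theorem --- is to match both sides of~\eqref{EqFactorAssoc} order by order against the explicitly known expansions of $\star$, hence of $\Assoc(\star)$, through $\hbar^{6}$ (and through $\hbar^{7}$ in the affine case), which confirms $c_n = n/6$ in that range.
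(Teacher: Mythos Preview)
The paper does not supply its own proof of this proposition: it is quoted verbatim as ``Corollary~4 and Conjecture ending~\S4'' of~\cite{kiev19}, with no proof environment following it here. So there is nothing in the present text to compare your argument against line by line.

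That said, your approach is the correct and standard one, and it is exactly the mechanism that underlies the cited corollary: the associator is the Maurer--Cartan curvature of $\mathcal{F}_{*}(\hbar P)$ in the Hochschild DGLA, and the $L_\infty$-morphism identity rewrites that curvature as $\mathcal{F}$ applied to the Schouten curvature $\tfrac12\schouten{\hbar P,\hbar P}$ in the first slot, yielding precisely the shape~\eqref{EqDiamond}. You are also right that the residual content --- the exact value $c_n=n/6$ --- is a matter of reconciling the normalizations of $\mathcal{F}_k$, of the Jacobiator vertex, and of the prefactors in~\eqref{EqStarMK}, and that in the cited source this value is a \emph{conjecture}, not a theorem.

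One correction to your closing sentence: the computations in \emph{this} paper (Propositions~\ref{PropAssoc6}--\ref{PropLeibnizOrder7Layer1}) solve only the \emph{weak} factorization problem --- they produce \emph{some} Leibniz-graph realization of each tri-differential component, with coefficients that are explicitly said not to coincide in general with the genuine Kontsevich weights $w(L)$ (see the Conclusion and the proof of Proposition~\ref{PropLeibnizOrder7Layer1}). Hence the present paper does not itself confirm $c_n=n/6$ through~$\hbar^6$ or~$\hbar^7$; that order-by-order check, to the extent it has been carried out, lives in~\cite{kiev19}.
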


The number of graphs which actually show up at order~$\hbar^k$ in the left-{} and right\/-\/hand sides of factorization problem~\eqref{EqFactorAssoc} is reported in Table~\ref{TabMKLeibnizgraphsAssoc}.
\begin{table}[htb]
\caption{\label{TabMKLeibnizgraphsAssoc}%
The number of graphs in either side of the associator's factorization.}
\begin{center}  \begin{tabular}{lrrrrrr}  \br
$k$ &  2 & 3 & 4 & 5 & 6 & 7\\
\mr
LHS: \# Kontsevich graphs, &  3\,(Jac) & 39 & 740 & 12464 & 290305 & ?\\
\mbox{ }\qquad coeff${}\neq 0$ &&&&&& \\
RHS: \# Leibniz graphs, &  1\,(Jac) & 13 & 241 & 4609 & ? &  ?\\
\mbox{ }\qquad coeff${}\neq 0$ &&&&&& \\
\br
\end{tabular}
\end{center}
\end{table}
For example, in~\cite[\S5]{kiev19} we inspect many graphs of different orders, and establish the equality of sums of Kontsevich graphs in the associator and sums of Leibniz graphs --\,in the factorizing operator\,-- after they are expanded into the Kontsevich graphs.

In the right\/-\/hand side of the associator for~$\star$, there are Leibniz graphs: at $\hbar^{k\geqslant2}$, such Leibniz graphs have $3$~sinks, $k-1$ aerial vertices (of which one vertex, the Jacobiator, has three outgoing edges, and the remaining $k-2$ vertices (if any) each have two outgoing
edges), and, by the above, $3+(k-2)\cdot 2 = 2k-1$ edges; tadpoles are not allowed, graphs with multiple edges are discarded. For each $k = 2$,\ $3$,\ $4$,\ $5$ we generate all such admissible Leibniz graphs (those can be \emph{zero} graphs with a parity\/-\/reversing automorphism, cf.~\cite{cpp}); the respective number of such Leibniz graphs at each order~$\hbar^k$ is in Table~\ref{TabNumberLeibnizAssoc}.
\begin{table}[htb]
\caption{\label{TabNumberLeibnizAssoc}%
The count of admissible Leibniz graphs in the associator for Kontsevich's~$\star$.}
\begin{center}  \begin{tabular}{lrrrrr} \br
$k$ & 2 & 3 & 4 & 5 & 6\\
\mr
\# Leibniz graphs, generated & 1 & 24 & 520 & 11680 & 293748\\
\# Leibniz graphs generated, nonzero & 1 & 24 & 490 & 11260 & 285684\\
\# Leibniz graphs generated, nonzero, diff.\,order${}>0$ & 1 & 15 & 301 & 6741 & 171528\\
\# Leibniz graphs (coeff${}\neq0$ in associator) & 1 & 13 & 241 & 4609 & ?\\
\mr
\# Cyclic weight relations & 1 & 15 & 301 & 6741 & 171528\\
Corank of linear algebraic system & 0 & 3 & 66 & 1469 & ?\\
\br \end{tabular}  \end{center}
\end{table}
At every order~$k$, we generate the entire set of the cyclic weight relations (cf.~\cite{FelderWillwacher}); 
every cyclic weight relation is a linear constraint upon the weights of
several Leibniz graphs;
all those weights are given by the Kontsevich integral formula from~\cite{MK97}. 
The number of these linear relations and the (co)\/rank of this linear algebraic system follow in Table~\ref{TabNumberLeibnizAssoc}.

Banks\/--\/Panzer\/--\/Pym in~\cite{BPP} do not list the weights of Leibniz graphs (as in Table~\ref{TabNumberLeibnizAssoc} above), for these graphs do not show up in the $\star$-\/product itself where the vertex\/-\/edge valency is different for the Kontsevich graphs.
We use the software \textsf{kontsevint} by Panzer 
(cf.~\cite{BPP}) 
to calculate the Kontsevich weights of all the Leibniz graphs which are admissible for the right\/-\/hand side of star\/-\/product's associator. 
(Some weights can --\,and actually do\,-- vanish because either the graph is zero, or the weight integrand is identically zero, or the weight formula integrates to a zero number.) 
The count of admissible Leibniz graphs with nonzero weights is in the fourth line of Table~\ref{TabNumberLeibnizAssoc}: the corresponding line in Table~\ref{TabMKLeibnizgraphsAssoc} is reproduced verbatim. 
(The Leibniz graphs with zero weights do nominally show up in the cyclic weight relations for Leibniz graphs, but in fact stay invisible in the formulas.)

\begin{proposition}
The numeric values of the Kontsevich weights~$w(L)$ of Leibniz graphs with~$k$ aerial vertices on $3$~sinks, which we calculated using Panzer's software \textup{\textsf{kontsevint}}, 
do satisfy\footnote{%
The relations are satisfied exactly, without involvement of any conventional 
constants and normalizations (in contrast with the mandatory use of auxiliary constants $c_n = n/6$ in Proposition~\ref{PropFactorAssocWithCn}, see above).
But let us remember that the multiplicativity of Kontsevich weights is more subtle for graphs on three ground vertices than for Kontsevich's graphs on two sinks.} 
the system of linear algebraic equations given by the cyclic weight relations for $k = 1$,$2$,$3$,$4$.
\end{proposition}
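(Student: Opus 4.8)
\quad The statement is a finite verification, so the plan is computational. First I would enumerate, for each $k\in\{1,2,3,4\}$, the complete list of admissible Leibniz graphs on $m=3$ sinks and $k$ aerial vertices: one Jacobiator vertex with three outgoing edges to three distinct non\/-\/tadpole targets, and the remaining $k-1$ aerial vertices being wedges, discarding graphs with double edges but retaining the formally zero graphs (those with a parity\/-\/reversing automorphism). The cardinalities obtained this way must match the first line of Table~\ref{TabNumberLeibnizAssoc}, and after removing zero graphs and isomorphic duplicates one is left with the counts in the second and third lines; this already gives an internal consistency check on the enumeration code. For each surviving graph $L$ I would then run Panzer's \textsf{kontsevint} to obtain the Kontsevich weight $w(L)$, in closed form as a rational multiple of a power of $\pi^{-1}$ (for $k\leqslant4$ no zeta values yet enter; those appear from $k=5$, $6$ onwards via $\zeta(3)^2/\pi^6$).

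Next I would generate the Shoikhet\/--\/Felder\/--\/Willwacher cyclic weight relations for precisely this set of graphs. Each relation is one linear equation with integer (or small rational) coefficients in the unknowns $w(L)$; assembled into a sparse matrix, their number and the corank of the resulting system are the last two lines of Table~\ref{TabNumberLeibnizAssoc}. I would then substitute the computed weight vector and check that every coordinate of the residual vanishes. Since the $k\leqslant4$ weights are available in exact closed form, this substitution can be carried out symbolically, so the conclusion is an exact identity and not a numerical coincidence --\,consistent with the footnote's claim that the relations hold without any auxiliary normalization constants. (If at some stage only high\/-\/precision floating point were used, one would additionally certify each identity by bounding the rounding error below the smallest nonzero value the relations could produce, but for $k\leqslant4$ this fallback is not needed.)

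The main obstacle is twofold. On the computational side it is the combinatorial growth: at $k=4$ one already handles on the order of $10^{4}$ graphs and a comparable number of cyclic relations, so every weight integral must be evaluated reliably and the bookkeeping --\,graph\/-\/isomorphism reduction, the Left${}\prec{}$Middle${}\prec{}$Right edge\/-\/ordering signs, detection of zero graphs, correct placement of the Jacobiator\,-- must be airtight. On the conceptual side, the delicate point flagged in the footnote to Proposition~\ref{PropFactorAssocWithCn} is that the multiplicativity of Kontsevich weights, which on two ground vertices lets one factor a weight through connected components, is more subtle on three ground vertices; hence one cannot shortcut the computation of $w(L)$ for disconnected or product\/-\/type Leibniz graphs and must integrate each configuration space directly. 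Once the enumeration, the weight evaluation, and the sign conventions are under control, verifying the cyclic relations is a routine --\,if large\,-- substitution.
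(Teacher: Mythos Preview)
Your proposal is correct and follows essentially the same route as the paper: the proposition is stated there as the outcome of a direct computer verification (enumerate the admissible Leibniz graphs at each $k$, compute every $w(L)$ with \textsf{kontsevint}, generate all cyclic weight relations, and substitute), and the paper gives no further argument beyond the counts in Table~\ref{TabNumberLeibnizAssoc}. One small caveat: your assertion that for $k\leqslant4$ the weights are purely rational (no zeta values) is plausible but is not something the paper establishes for Leibniz graphs on three sinks, so in practice you would simply carry whatever closed form \textsf{kontsevint} returns and substitute symbolically rather than assume rationality in advance.
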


From now on in this paper, we study the associativity of Kontsevich's $\star$-\/product from a different perspective, because at~$\hbar^{k\geqslant 6}$ the number of Leibniz graphs --\,to realize the associator of~$\star$ at~$\hbar^k$\,-- is too big for the weight $w(L)$ to be computed for every such graph~$L$ (either before or after the system of cyclic weight relations is formed at~$k\geqslant 6$).

Let us recall that the associator naturally splits into homogeneous orders with respect to the three sinks; 
so does the set of relevant Leibniz graphs. 
(The cyclic weight relations correlate the weights~$w(L)$ of Leibniz graphs for \emph{different} tri\/-\/differential orders in the associator.)
We say that finding the values of Leibniz graph coefficients dictated by the Kontsevich integral formula yields \emph{the} solution of the \emph{strong} factorization problem for the associator of the star\/-\/product.
Yet, to certify the associativity it suffices to find \emph{a} realization of each tri\/-\/differential component as a weighted sum of Leibniz graphs regardless of any such realizations for other tri\/-\/differential orders, that is without imposing the known constraints upon the Leibniz graph weights.
One big problem thus splits into many small subproblems, which are solved independently.
The result is a solution to the \emph{weak} factorization problem, which we report in this paper.
Let us remember that the found values of Leibniz graph coefficients are then not necessarily equal to the Kontsevich integrals~$w(L)$ (times the rational factors which count the multiplicities).

\section*{The layers of Leibniz graphs: contract and expand edges in the Kontsevich graphs}

The idea which we start with is to \emph{not} consider those Leibniz graphs whose expansion --\,of Jacobiators into sums of Kontsevich's graphs, and of all the derivations acting on the Jacobiators by the Leibniz rule\,-- does not reproduce any of the Kontsevich graphs in the associator itself. 

By definition, the 
\emph{$0$th layer} of Leibniz graphs is obtained 
--\,for a given linear combination of Kontsevich graphs\,--
by contracting one internal edge in every Kontsevich graph in all possible ways.
By expanding the $0$th layer Leibniz graphs back to Kontsevich's graphs, one reproduces their original set, but \emph{new} Kontsevich's graphs can be obtained. The coefficients of these new Kontsevich graphs, not initially present in the given linear combination, either cancel out or do not all vanish identically. 
If not, then by repeating for those new Kontsevich graphs the above contraction\/-\/expansion procedure, one reproduces (part of) the $0$th layer but also produces 
the $1$st layer of new Leibniz graphs and from them, possibly a still larger set of Kontsevich's graphs (whose number is finite for a given number of aerial vertices). 
The construction of layers is iterated until saturation
(e.g., see Table~1 in~\cite{JPCS2017}). 
Even if the saturation requires layers to achieve, the resulting number of Leibniz graphs at hand is much smaller than the number of Leibniz graphs on equally many vertices.\footnote{%
The attribution of Leibniz graphs to layers seems to depend on the choice of propagator in the integral formula of~$w(\Gamma)$, cf.~\cite{OperadsMotives1999}: the choice dictates the set of Kontsevich graphs actually showing up in the associator (whence the $0$th layer of Leibniz graphs).}
Moreover, in this paper running the algorithm until saturation is not obligatory; the first layer of Leibniz graphs
is already enough.

\begin{rem}
To avoid repetitions, we use the normal form of Leibniz graphs; it refers to the encoding of directed graphs in \textsf{nauty} 
within \textsf{SageMath} 
(see
~\ref{AppLeibniz} for further discussion).
\end{rem}

\begin{proposition}[see
~\ref{AppStarAssoc6}]
\label{PropAssoc6}
The Kontsevich $\star$-product with the harmonic graph weights, known up to $\bar{o}(\hbar^6)$ from Banks\/--\/Panzer\/--\/Pym \cite{BPP}, is associative modulo $\bar{o}(\hbar^6)$: every tri-differential homogeneous component of the associator admits \emph{some} realization by Leibniz graphs; to find 
such solution, the $0$th layer of Leibniz graphs suffices for each of the tri-differential orders.
\end{proposition}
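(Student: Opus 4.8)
\quad
The plan is to produce, order by order in~$\hbar$ up to~$\hbar^6$ and separately within each tri\/-\/differential homogeneity type on the three sinks, an explicit weighted sum of $0$th\/-\/layer Leibniz graphs whose expansion into Kontsevich graphs coincides with the corresponding component of $\Assoc(\star)(P)$; once such a certificate exists, associativity mod~$\bar{o}(\hbar^6)$ follows at once, because every Leibniz graph evaluates to the zero multi\/-\/differential operator on a genuine Poisson bi\/-\/vector~$P$ (the Jacobiator $\tfrac{1}{2}\schouten{P,P}$ sits in the trivalent vertex).

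First I would take the Kontsevich $\star$\/-\/product with harmonic weights as tabulated in~\cite{BPP}, substitute it into $(f\star g)\star h - f\star(g\star h)$, and expand the resulting composition into Kontsevich graphs on $m=3$ ground vertices, truncating at~$\hbar^6$; collecting isomorphic graphs in \textsf{nauty} normal form and discarding zero graphs yields the associator as a finite rational vector~$\vec{a}$ that splits as a direct sum $\vec{a}=\bigoplus_{(i,j,\ell)}\vec{a}_{(i,j,\ell)}$ over tri\/-\/differential orders (its nonzero\/-\/coefficient graph counts are those in the LHS row of Table~\ref{TabMKLeibnizgraphsAssoc}). It should be stressed that the vanishing of~$\vec{a}$ on Poisson~$P$ is already guaranteed by the theorem of~\cite{MK97}; what Proposition~\ref{PropAssoc6} asserts --- and what the proof must deliver --- is the stronger combinatorial fact that a realizing cocycle can be chosen \emph{inside} the $0$th layer.

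Next I would build that $0$th layer: for every Kontsevich graph occurring in~$\vec{a}$, contract each aerial\/-\/to\/-\/aerial edge in all admissible ways, so as to create one trivalent (Jacobiator) vertex while the remaining aerial vertices stay bivalent; reduce every result to \textsf{nauty} normal form and drop the zero graphs together with the graphs carrying a double edge or a tadpole. This yields a finite generating set~$\{L_\alpha\}$. Then I would implement the \emph{expansion} linear map~$E$ that sends each~$L_\alpha$ to the Kontsevich\/-\/graph vector obtained by writing $\tfrac{1}{2}\schouten{P,P}$ as the sum of three wedge graphs and distributing, by the Leibniz rule, every derivation that had acted on the Jacobiator vertex, once more collected into normal form. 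Restricted to a fixed tri\/-\/differential order, $E$ is a sparse rational matrix, and the remaining task is to solve $E\vec{x}=\vec{a}_{(i,j,\ell)}$ for every~$(i,j,\ell)$ and every $k\leqslant 6$. Solving these independent systems over~$\mathbb{Q}$ --- a \emph{weak} factorization, since the components of~$\vec{x}$ need not reproduce the Kontsevich integrals $w(L_\alpha)$ --- and verifying that each of them is consistent finishes the argument: assembling the solutions over all tri\/-\/differential orders exhibits $\Assoc(\star)(P)$ mod~$\bar{o}(\hbar^6)$ as a combination of $0$th\/-\/layer Leibniz graphs, hence as a multi\/-\/differential operator that is identically zero whenever~$P$ is Poisson.

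The main obstacle is sheer scale at~$\hbar^6$: the associator then involves $290305$ Kontsevich graphs with nonzero coefficient and the relevant Leibniz graphs number in the hundreds of thousands (Tables~\ref{TabMKLeibnizgraphsAssoc}--\ref{TabNumberLeibnizAssoc}), so the graph generation, the isomorphism reduction, and above all the exact linear algebra must be organized with care --- which is precisely why the splitting into tri\/-\/differential orders (turning one enormous problem into many small independent ones) is indispensable, and why one works with sparse rational solvers rather than forming and inverting a single huge matrix. A second, more structural, point is that there is no a~priori reason for the $0$th layer to close up: in general one may be forced to pass to the $1$st layer or beyond (cf.~\cite{JPCS2017}), and the content of Proposition~\ref{PropAssoc6} is exactly that for $k\leqslant 6$ this does not occur --- a fact that can only be certified a~posteriori, by producing the solution. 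Some additional bookkeeping care is needed for the signs induced by the Left${}\prec{}$Middle${}\prec{}$Right orderings of outgoing edges and for the parity\/-\/reversing automorphisms that render certain graphs zero.
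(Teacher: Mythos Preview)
Your strategy is essentially the paper's: generate the associator, split by tri\/-\/differential order, build the $0$th layer by edge contraction, and solve the resulting linear systems. One genuine oversight, however: at~$\hbar^6$ the associator is \emph{not} a rational vector. Since the harmonic weights from~\cite{BPP} lie in $\mathbb{Q}[\zeta(3)^2/\pi^6]$ starting at $n=6$, the coefficients of Kontsevich graphs in $\Assoc(\star)$ at~$\hbar^6$ are $\mathbb{Q}$-linear combinations of~$1$ and~$\zeta(3)^2/\pi^6$. Your linear systems therefore cannot be posed ``over~$\mathbb{Q}$'' as you write; either you solve over the extension $\mathbb{Q}[\zeta(3)^2/\pi^6]$ (the matrix~$E$ stays rational, so this is harmless), or you split each component into a rational slice and a $\zeta(3)^2/\pi^6$-slice and solve two rational systems with the same~$E$.

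The paper takes the second route and, in doing so, uncovers a structural point you would not see with your uniform approach: if one builds the $0$th layer \emph{only} from the Kontsevich graphs appearing in the $\zeta(3)^2/\pi^6$-slice, then for six tri\/-\/differential orders $\{(1,1,3),(3,1,1),(2,1,2),(1,2,2),(2,2,1),(1,3,1)\}$ that layer is too small and no solution exists; one must enlarge the generating set to include the Kontsevich graphs from the rational slice of the same order before the $0$th layer suffices. Your formulation --- building the $0$th layer from \emph{all} Kontsevich graphs present in~$\vec{a}_{(i,j,\ell)}$ regardless of which slice they sit in --- sidesteps this subtlety automatically, so your scheme would succeed once the coefficient field is corrected. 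The paper also economizes by citing~\cite{cpp} for $k\leqslant 4$ and~\cite{BuringDisser} for $k=5$, reserving the new computation for~$k=6$.
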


\begin{proof}[Proof scheme]
The associativity of Kontsevich's $\star$-product up to $\bar{o}(\hbar^4)$, that is, \[\operatorname{Assoc}(\star(P))(f,g,h)\text{ mod }\bar{o}(\hbar^4) = \Diamond(P, [\![P, P]\!])(f,g,h)\text{ mod }\bar{o}(\hbar^4),\] is the core of paper~\cite{cpp}.
Next, in \cite[Part I, \S 3.5.1]{BuringDisser} we provide a realization of the component $\sim \hbar^5$ in the associator $\operatorname{Assoc}(\star)$ mod $\bar{o}(\hbar^5)$ in terms of the Leibniz graphs from the $0$th layer,
that is, by using the Leibniz graphs obtained at once by contracting edges between aerial vertices in the Kontsevich graphs from the associator.

There are $105$ homogeneous tri-differential order components at $\hbar^6$ in the associator $\operatorname{Assoc}(\star)$ mod $\bar{o}(\hbar^6)$. 
We import the harmonic graph weights at $\hbar^5$ and $\hbar^6$ in $\star$ mod $\bar{o}(\hbar^6)$ from the \textsf{kontsevint} repository of E. Panzer (Oxford).
At order $\hbar^6$, the weights of Kontsevich graphs in $\star$ are expressed as $\mathbb{Q}$-linear combinations of $1$ and $\zeta(3)^2/\pi^6$.
In consequence, the coefficients of Kontsevich graphs in the associator at order $\hbar^6$ are also $\mathbb{Q}$-linear combinations of that kind.
Every tri-differential homogeneous component of the associator is thus split into the rational- and $\zeta(3)^2/\pi^6$-slice: either of the slices is a linear combination of Kontsevich's graphs with rational coefficients.
The rational slices are met in all of the $105$ tri-differential orders; we detect that in every such slice the Kontsevich graphs provide the $0$th layer of Leibniz graphs which suffices to realize that sum of Kontsevich graphs as a linear combination of these Leibniz graphs.
The $\zeta(3)^2/\pi^6$-slice is nontrivial in $28$ tri-differential orders of the associator at $\hbar^6$;
here the Formality mechanism works as follows.
For all but $6$ tri-differential orders, the Kontsevich graphs from the linear combination near $\zeta(3)^2/\pi^6$ suffice to provide the set of $0$th layer Leibniz graphs which are enough for a solution of the factorization problem.
The tri-differential orders $\{(1,1,3),(3,1,1),(2,1,2),(1,2,2),(2,2,1),(1,3,1)\}$ are special:
for a solution to appear, the sets of Kontsevich graphs from the rational and $\zeta(3)^2/\pi^6$-slices within that tri-differential order must be merged
and then the union set is enough to provide a factorization of the $\zeta(3)^2/\pi^6$-slice by the $0$th layer of Leibniz graphs.
The corresponding computations are presented in
~\ref{AppStarAssoc6}. 
We conclude that at order $6$ for the full Kontsevich star\/-\/product, Kontsevich's Formality mechanism works as expected.
\end{proof}


The seventh order expansion of the Kontsevich star\/-\/product for arbitrary Poisson brackets is unknown 
(see Table~1 
in~\cite{affine7} for the count of 2,814,225 Kontsevich graphs at~$\hbar^7$).
So far, in \cite{affine7} we have obtained the \emph{affine} star\/-\/product $\star_{\text{aff}}\text{ mod }\bar{o}(\hbar^7)$ under the assumption that the coefficients of the Poisson bracket are affine functions (e.g., linear on the affine base manifold).
For example such are the Kirillov\/--\/Kostant Poisson brackets on the duals~$\mathfrak{g}^*$ of Lie algebras.
We discover that the associativity mechanism for the affine star\/-\/product at $\hbar^7$ 
differs from the mechanism which worked at lower orders of expansion in $\hbar$ for the full star\/-\/product $\star\text{ mod }\bar{o}(\hbar^6)$.
Moreover, the new mechanism of associativity for $\star_{\text{aff}}\text{ mod }\bar{o}(\hbar^7)$ forces a new mechanism of associativity for the full star\/-\/product $\star\text{ mod }\bar{o}(\hbar^7)$ starting at order seven.
The difference is the necessity of Leibniz graph layers beyond the $0$th layer, which itself was enough at lower orders to build a solution of the weak problem for associator's factorization via the Jacobi identity.

\begin{proposition}
\label{PropStarAffineAssoc}
The affine Kontsevich star\/-\/product expansion $\star_{\textup{aff}}$ \textup{mod} $\bar{o}(\hbar^7)$ found in~\cite[Proposition~8]{affine7} 
is associative modulo $\bar{o}(\hbar^7)$.
Namely, \textup{(}every homogeneous tri-differential component of\textup{)} the associator $(f \star_{\textup{aff}} g) \star_{\textup{aff}} h - f \star_{\textup{aff}} (g \star_{\textup{aff}} h)$ \textup{mod} $\bar{o}(\hbar^7)$ is realized as \emph{some} sum of Leibniz graphs.
\end{proposition}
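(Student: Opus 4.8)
The plan is to repeat, at order $\hbar^7$, the edge\/-\/contraction/expansion construction of layers of Leibniz graphs which was used in the proof of Proposition~\ref{PropAssoc6}, but now to carry it one step further. First I would import from~\cite[Proposition~8]{affine7} the affine star\/-\/product $\star_{\textup{aff}}$ mod $\bar{o}(\hbar^7)$, that is the harmonic weights $w(\Gamma)$ of all Kontsevich graphs $\Gamma\in\hat{G}^{n\leqslant7}_{2}$ with in\/-\/degree${}\leqslant1$ at every aerial vertex (these are the graphs relevant for affine brackets), expand the associator $(f \star_{\textup{aff}} g) \star_{\textup{aff}} h - f \star_{\textup{aff}} (g \star_{\textup{aff}} h)$ into Kontsevich graphs on $m=3$ sinks, and split it into its finitely many homogeneous tri\/-\/differential components. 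Each such component is a linear combination of Kontsevich graphs whose coefficients are $\mathbb{Q}$-\/linear combinations of $1$ and $\zeta(3)^2/\pi^6$; following the $\hbar^6$ analysis I would further separate the rational slice and the $\zeta(3)^2/\pi^6$-\/slice of each component and treat them --\,merging the two slices within a tri\/-\/differential order whenever a solution fails to appear separately\,-- as independent weak factorization subproblems.

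For each subproblem the procedure is: form the $0$th layer of Leibniz graphs by contracting, in all admissible ways, one edge between aerial vertices in each Kontsevich graph that occurs; expand every such Leibniz graph (Jacobiators $\tfrac{1}{2}\schouten{P,P}$ into sums of wedge graphs, the derivations acting by the Leibniz rule) back into Kontsevich graphs; and solve the linear algebraic system equating the given combination of Kontsevich graphs with an unknown weighted sum of the layer's Leibniz graphs, using the tri\/-\/differential vanishing lemma recalled after the definition of Leibniz graph so that each homogeneity order is tested separately. Where the $0$th\/-\/layer system turns out to be inconsistent --\,and this is exactly what the Proposition asserts does happen at order seven, unlike at $k\leqslant6$\,-- I would take the Kontsevich graphs that retain a nonzero residual coefficient, contract their aerial edges again to produce the $1$st layer of Leibniz graphs, adjoin these to the list of unknowns, and re\/-\/solve; the normal form of Leibniz graphs via the \textsf{nauty}/\textsf{SageMath} encoding keeps the unknowns free of duplicates, and the number of aerial vertices being fixed guarantees termination. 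One then records that the $1$st layer suffices for every tri\/-\/differential order (in its rational, $\zeta(3)^2/\pi^6$, or merged slice), consistently with the assimilation of the $\zeta(3)^2/\pi^6$-\/combination into Leibniz graphs already established in~\cite{affine7}.

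The main obstacle I expect is computational scale together with the extra bookkeeping that the layer beyond the $0$th one forces: the sets of Kontsevich graphs on three sinks produced by expanding $1$st\/-\/layer Leibniz graphs are large, the corresponding linear systems are big and must be solved exactly over $\mathbb{Q}$ in each of the many tri\/-\/differential orders, and one must correctly detect which orders genuinely need the additional layer and which ones need the two slices merged before a realization exists. A secondary subtlety, already flagged for $m=3$ sinks, is that the multiplicativity of Kontsevich weights is more delicate there, so --\,since only the \emph{weak} factorization is claimed\,-- the coefficients found for the Leibniz graphs need not equal the Kontsevich integrals $w(L)$ (times the rational multiplicity factors), and it is enough to exhibit any consistent rational solution of each subsystem. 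The detailed runs, graph counts, and per\/-\/order solutions would be deferred to the appendix, as in~\cite{affine7} and~\cite{BuringDisser}.
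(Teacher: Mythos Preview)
Your approach is essentially the paper's: iterate edge contraction to build layers of Leibniz graphs and solve the resulting linear systems per tri\/-\/differential order, finding that the $1$st layer is needed at~$\hbar^7$. Two points of divergence deserve comment.

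First, the paper does \emph{not} slice the $\hbar^7$ affine associator into a rational part and a $\zeta(3)^2/\pi^6$ part; each tri\/-\/differential component is handled whole, and the appendix stresses that the need for the $1$st layer ``does not stem from any splitting of the associator into the rational and $\zeta(3)^2/\pi^6$ parts''. The slicing you import from the $\hbar^6$ argument is therefore superfluous here (though harmless). Second, your recipe for forming the $1$st layer is imprecise: you propose to contract edges in ``Kontsevich graphs that retain a nonzero residual coefficient'', but an inconsistent linear system has no well\/-\/defined residual, and re\/-\/contracting edges only in the original associator graphs merely reproduces the $0$th layer. The paper's rule is to contract edges in the \emph{new} Kontsevich graphs that appear when the $0$th\/-\/layer Leibniz graphs are expanded; it is this step that genuinely enlarges the set of Leibniz unknowns. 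With that correction the method goes through: the paper reports exactly four exceptional tri\/-\/differential orders $(3,3,2)$, $(2,3,3)$, $(3,2,3)$, $(2,4,2)$ requiring the $1$st layer, and works throughout with affine Leibniz graphs (trident vertex in\/-\/degree $\leqslant 2$, all other aerial vertices in\/-\/degree $\leqslant 1$), a restriction your proposal does not make explicit.
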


\begin{proof}[Proof scheme]
With not yet specified undetermined coefficients of Kontsevich graphs at $\hbar^7$ in the affine star\/-\/product $\star_{\text{aff}}$ mod $\bar{o}(\hbar^7)$, its associator's part at $\hbar^7$ expands to 203 tri-differential order components.
As soon as the weights of all the new Kontsevich graphs on $n=7$ aerial vertices are fixed (see~\cite{affine7})
, the number of tri-differential orders $(d_0,d_1,d_2)$ actually showing up at $\hbar^7$ in the associator $\mathsf{A}$ for $\star_\text{aff}$ mod $\bar{o}(\hbar^7)$ drops to $161$.
For all but four tri-differential order components $\mathsf{A}_{d_0 d_1 d_2}$ in the associator $\mathsf{A}$, the $0$th layer of Leibniz graphs, which are obtained by contracting\footnote{Note that the Leibniz graphs in the $0$th layer have vertices of in-degree $\leqslant 2$ because they are obtained by the contraction of a single edge in the Kontsevich graphs with vertices of in-degree $\leqslant 1$.} one edge between aerial vertices in the Kontsevich graphs of every such tri-differential component $\mathsf{A}_{d_0 d_1 d_2}$, is enough to provide \emph{a} solution for the factorization problem, $\mathsf{A}_{d_0 d_1 d_2} = \Diamond_{d_0 d_1 d_2}(P, \schouten{P,P})$, expressing that component by using differential consequences of the Jacobi identity (encoded by Leibniz graphs).
We detect that for the tri-differential orders $(d_0,d_1,d_2)$ in the set $\{(3,3,2), (2,3,3), (3,2,3), (2,4,2)\}$,
the Leibniz graphs from the $0$th layer are not enough to reach a solution $\Diamond_{d_0 d_1 d_2}$; still a solution $\Diamond_{d_0 d_1 d_2}$ appears in each of these four exceptional cases after we add the Leibniz graphs from the $1$st layer (i.e.\ those graphs obtained by contraction of edges in the Kontsevich graph expansion of Leibniz graphs from the previous layer
).
(There are $2294$ Kontsevich graphs in $\mathsf{A}_{2,3,3}$, producing $3584$ Leibniz graphs in the respective $0$th layer immediately after the edge contractions;
the component $\mathsf{A}_{3,3,2}$ contains equally many Kontsevich graphs and the same number of Leibniz graphs in the $0$th layer;
the largest component $\mathsf{A}_{3,2,3}$ contains $2331$ Kontsevich graphs and gives $3603$ Leibniz graphs in the $0$th layer;
and finally $\mathsf{A}_{2,4,2}$ contains $1246$ Kontsevich graphs and produces $2041$ Leibniz graphs in the $0$th layer.)
In \cite[Part I, \S 3.7.8]{BuringDisser} we generate \emph{a} Leibniz graph factorization of \emph{all} tri-differential components in the associator for $\star_{\text{aff}}$ mod $\bar{o}(\hbar^7)$ and we provide the data files of Leibniz graphs and their coefficients: see
~\ref{AppStarAffineAssoc7} on p.~\pageref{AppStarAffineAssoc7} below.
\end{proof}

The reduced affine star\/-\/product $\star_{\text{aff}}^{\text{red}}$ mod $\bar{o}(\hbar^7)$ is obtained from the affine star\/-\/product $\star_{\text{aff}}$ mod $\bar{o}(\hbar^7)$ by realizing the coefficient of $\zeta(3)^2/\pi^6$ as the Kontsevich graph expansion of a linear combination of Leibniz graphs with rational coefficients and, now that this combination does not contribute to either the star\/-\/product or its associator when restricted to any affine Poisson structure, by discarding this part of $\star_{\text{aff}}$ mod $\bar{o}(\hbar^7)$ proportional to $\zeta(3)^2/\pi^6$.
The same applies to many terms in the rational part of $\star_{\text{aff}}\text{ mod }\bar{o}(\hbar^7)$ which also assimilate to Leibniz graphs, see~\cite{affine7}.
In the reduced affine star\/-\/product $\star_{\text{aff}}^\text{red}$ mod $\bar{o}(\hbar^7)$ there remain only $326$ nonzero rational coefficients of Kontsevich graphs at $\hbar^k$ for $k=0,\ldots,7$ (in contrast with $1423$ nonzero (ir)rational coefficients at orders up to $\hbar^7$ in $\star_{\text{aff}}$ mod $\bar{o}(\hbar^7)$).

\begin{proof}[Proof scheme \textup{(}for the reduced affine star\/-\/product $\star_{\textup{aff}}^{\textup{red}}$ \textup{mod} $\bar{o}(\hbar^7)$\textup{)}]
\label{ProofReducedAffineStar7}
The associator for $\star_{\text{aff}}^{\text{red}}$ contains $95$ tri-differential orders at $\hbar^6$ and $161$ tri-differential orders at $\hbar^7$. 
We see that the associator $\operatorname{Assoc}(\star_{\text{aff}}^{\text{red}})$ mod $\bar{o}(\hbar^7)$ becomes much smaller than $\operatorname{Assoc}(\star_{\text{aff}})$ mod $\bar{o}(\hbar^7)$, now containing only $29371$ Kontsevich graphs instead of $59905$. 
But the work of the associativity mechanism for $\star_{\text{aff}}^{\text{red}}$ requires the use of the $1$st 
layer 
of Leibniz graphs much more often than it already was for the affine star\/-\/product $\star_{\text{aff}}$ mod $\bar{o}(\hbar^7)$ before the reduction.
Now, at orders $\leqslant 7$ in $\hbar$, new Leibniz graphs from the layer(s) beyond the $0$th are indispensable for the factorization of $114$ out of $336$ homogeneous tri-differential order components of the associator, see
~\ref{AppStarAffineReducedAssoc7}
where we list all these exceptional orders.
\end{proof}

\begin{rem}
We observe that the number $\zeta(3)^2/\pi^6$, not showing up in any restriction of the affine star\/-\/product $f \star_{\text{aff}} g$ mod $\bar{o}(\hbar^7)$ to an affine Poisson structure and any arguments $f,g \in
 A 
[[\hbar]]$, acts in effect as a placeholder of the Kontsevich graphs which, by contributing to the associator and then creating the Leibniz graphs by edge contraction, provide almost all of the Leibniz graphs needed for a factorization of the associator for $\star_{\text{aff}}$ mod $\bar{o}(\hbar^7)$ via the Jacobi identity.
When the $\zeta(3)^2/\pi^6$-part of $\star_{\text{aff}}$ mod $\bar{o}(\hbar^7)$ itself is eliminated by using the Jacobi identity for affine Poisson structures, the remaining $\star_{\text{aff}}^{\text{red}}$ mod $\bar{o}(\hbar^7)$ and its associator rely heavily on the use of higher layer(s) of Leibniz graphs for a factorization solution to be achieved.
\end{rem}


\begin{proposition}
\label{PropLeibnizOrder7Layer1}
The $0$th layer of Leibniz graphs is \emph{not enough} to provide a factorization of the associator for the (either affine or full) Kontsevich star\/-\/product at order $\hbar^7$%
, whereas, according to Proposition~\ref{PropAssoc6} above, the $0$th layer of Leibniz graphs \emph{was enough} at order $\hbar^6$ to factor the associator for the full star\/-\/product.
\end{proposition}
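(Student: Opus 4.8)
The statement has a genuine content only in its negative part at order~$\hbar^7$; the positive assertion at~$\hbar^6$ is Proposition~\ref{PropAssoc6}. For the affine star\/-\/product~$\star_{\textup{aff}}\textup{ mod }\bar{o}(\hbar^7)$ the non\/-\/sufficiency of the $0$th layer is already exhibited inside the proof of Proposition~\ref{PropStarAffineAssoc}: in each of the four tri\/-\/differential orders $(d_0,d_1,d_2)\in\{(3,3,2),(2,3,3),(3,2,3),(2,4,2)\}$ no linear combination of $0$th\/-\/layer Leibniz graphs equals the associator component $\mathsf{A}_{d_0 d_1 d_2}$, and a factorization appears only once the $1$st layer of Leibniz graphs is adjoined. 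The point that still has to be settled is to upgrade ``the search returned no solution'' to ``no solution exists''. I would do this by producing an explicit linear\/-\/algebraic obstruction in at least one of the four orders --\,$(2,4,2)$ is the cheapest, with $1246$ Kontsevich graphs and $2041$ Leibniz graphs in its $0$th layer. Expand every $0$th\/-\/layer Leibniz graph into Kontsevich graphs; form over~$\mathbb{Q}$ the matrix whose columns are these expansions written in the \textsf{nauty}/\textsf{SageMath} normal\/-\/form basis of Kontsevich graphs, with one extra column~$\mathsf{A}_{2,4,2}$ in the same basis; and exhibit a vector~$\phi$ in the left kernel of the Leibniz block with $\langle\phi,\mathsf{A}_{2,4,2}\rangle\neq 0$. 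Such a~$\phi$ is a finite, exact rational certificate that $\mathsf{A}_{2,4,2}$ lies outside the span of the $0$th layer, hence that the $0$th layer is not enough; the analogous certificates in the remaining three orders are obtained in the same way. (For all other tri\/-\/differential orders at~$\hbar^7$, and for every order at~$\hbar^6$ covered by Proposition~\ref{PropAssoc6}, an explicit $0$th\/-\/layer factorization is already on record and is its own certificate of sufficiency, so nothing more is needed there.)

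For the full Kontsevich star\/-\/product~$\star\textup{ mod }\bar{o}(\hbar^7)$, whose Kontsevich graphs with an aerial vertex of in\/-\/degree${}\geqslant 2$ have unknown weights, I would deduce the claim from the affine case by restriction to affine Poisson structures. On a bivector~$P$ with affine coefficients every Formality graph with an aerial vertex of in\/-\/degree${}\geqslant 2$ evaluates to zero --\,this applies both to Kontsevich graphs and to Leibniz graphs, whose Jacobiator vertex then carries $\schouten{P,P}$, which is again affine\,-- so that $\Assoc(\star)$ and $\Assoc(\star_{\textup{aff}})$ coincide identically on affine~$P$, at~$\hbar^7$ in particular. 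Suppose, towards a contradiction, that the $0$th layer of Leibniz graphs suffices to factor $\Assoc(\star)\textup{ mod }\bar{o}(\hbar^7)$. Restricting such a factorization to affine Poisson structures, the in\/-\/degree${}\geqslant 2$ Leibniz graphs drop out and the surviving ones --\,all of aerial in\/-\/degree${}\leqslant 1$\,-- yield a factorization of $\Assoc(\star_{\textup{aff}})$ at~$\hbar^7$; provided each such survivor is recognised as a contraction of an affine\/-\/relevant Kontsevich graph already present in $\Assoc(\star_{\textup{aff}})$ --\,i.e.\ as a member of the affine $0$th layer\,-- this contradicts the four exceptional orders above. The verification of this last ``provided'' is a short in\/-\/degree bookkeeping on Formality graphs under a single edge contraction, which I would carry out explicitly; it is the combinatorial heart of the reduction.

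The main obstacle is the first step --\,turning the negative affine result into a theorem rather than the outcome of an unsuccessful search. The linear systems in the four exceptional orders are large (a few thousand Kontsevich\/-\/graph coordinates against a few thousand Leibniz\/-\/graph columns), so the obstruction vector~$\phi$ must be computed by exact rational linear algebra, stored together with the finite list of graphs on which it is supported and the single nonzero pairing $\langle\phi,\mathsf{A}_{d_0 d_1 d_2}\rangle$, and then re\/-\/verified by an independent routine. One must also take care that the pool actually tested is exactly the $0$th layer --\,the Leibniz graphs from a single edge contraction in the Kontsevich graphs of the fixed tri\/-\/differential component, with zero graphs discarded and everything reduced to normal form\,-- so that the certificate rules out precisely the object named in the statement. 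By comparison, the reduction to the full star\/-\/product is routine once the affine case is in place, the only residual work being the edge\/-\/contraction bookkeeping flagged above.
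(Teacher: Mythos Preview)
Your affine\/-\/case strategy---producing an explicit left\/-\/kernel certificate that $\mathsf{A}_{2,4,2}$ lies outside the $\mathbb{Q}$-span of the $0$th layer---is sound and in fact more transparently rigorous than what the paper does. The paper instead exhibits one concrete Leibniz graph $L_1$ (drawn explicitly, edge list $[(3,2),(3,7),(4,1),(4,8),(5,1),(5,3),(6,1),(6,2),(6,4),(7,0),(7,5),(8,0),(8,1)]$), checks that $L_1$ lies in the $1$st but not the $0$th layer at order $(2,4,2)$, records that $L_1$ carries coefficient $2/135$ in an iteratively found factorization and nonzero genuine weight $w(L_1)=-\tfrac{3}{128}\,\zeta(3)^2/\pi^6+\tfrac{31}{725760}$, and explicitly labels the conclusion ``empiric evidence''. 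Your certificate would replace that empiric step by an exact obstruction.

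For the full star\/-\/product the paper does \emph{not} argue by restriction. It checks directly that $L_1$ is absent from the full $0$th layer too: every Kontsevich graph on three sinks whose contraction could yield $L_1$ turns out to be composite, with one of its two factors on two sinks having Kontsevich weight zero, so the composite has coefficient zero in the associator irrespective of the still\/-\/unknown $\hbar^7$ weights. Your restriction route has a real gap that is \emph{not} closed by in\/-\/degree bookkeeping. A Leibniz graph $L$ with every aerial in\/-\/degree $\leqslant 1$ can arise by contracting an edge $u\to v$ in a \emph{non\/-\/affine} Kontsevich graph $\Gamma$: take $\operatorname{indeg}_\Gamma(v)=2$, $\operatorname{indeg}_\Gamma(u)=0$, all other aerial in\/-\/degrees $\leqslant 1$; the merged trident then has in\/-\/degree $0+(2-1)=1$. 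Such a $\Gamma$ vanishes on affine $P$ yet may carry a nonzero coefficient in the full associator, placing $L$ in the full $0$th layer without any affine pre\/-\/image present in $\Assoc(\star_{\text{aff}})$. Thus ``surviving full $0$th layer $\subseteq$ affine $0$th layer'' is a statement about which graphs actually occur with nonzero coefficient, not a combinatorial tautology; it must be verified graph by graph---which is precisely what the paper's footnote does for the single witness $L_1$.
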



\begin{proof}[Constructive proof]\hangindent=-6.15cm \hangafter=-6
Consider the Leibniz graph (see figure)%
{\unitlength=1.8mm
\linethickness{0.4pt}
\begin{picture}(0.1,0.1)
(-61,14)
\put(-6.00,9.00){$L_1 = {}$}
\put(2.00,5.00){\circle*{1}}
\put(9.00,5.00){\circle*{1}}
\put(16.00,5.00){\circle*{1}}
\put(2.00,14.00){\circle*{1}}
\put(2.00,14.00){\vector(0,-1){8.33}}
\put(2.00,14.00){\vector(2,-1){5.33}}
\put(7.33,11){\circle*{1}}
\put(7.33,11){\vector(0,1){6}}
\put(7.33,11){\vector(1,-4){1.5}}
\put(5.50,8.33){\circle*{1}}
\put(5.50,8.33){\vector(1,-1){3.0}}
\put(5.50,8.33){\vector(-1,-1){3.0}}
\put(9.00,9.33){\circle*{1}}
\put(9.00,9.33){\vector(0,-1){3.7}}
\put(9.00,9.33){\vector(-3,-1){3.0}}
\put(11.50,8.33){\circle*{1}}
\put(11.50,8.33){\vector(-3,1){2.0}}
\put(11.50,8.33){\vector(-1,-2){1.7}}
\put(11.50,8.33){\vector(3,-2){4.3}}
\put(7.33,17.33){\circle*{1}}
\put(7.33,17.33){\vector(-2,-1){5.5}}
\put(7.33,17.33){\vector(2,-3){7.62}}
\end{picture}%
}%
on three sinks $0,1,2$, with $\tilde{n}=6$ aerial vertices, and with edges [(3, 2), (3, 7), (4, 1), (4, 8), (5, 1), (5, 3), (6, 1), (6, 2), (6, 4), (7, 0), (7, 5), (8, 0), (8, 1)]. 
This Leibniz graph is needed for the factorization of the tri-differential component of order $(2,4,2)$ at $\hbar^7$ in the associator for $\star_{\text{aff}}$ mod $\bar{o}(\hbar^7)$.
This graph appears only in the $1$st layer of Leibniz graphs, not in the $0$th layer, as we contract edges of Kontsevich's graphs on $n=7$ aerial vertices in the associator for $\star_{\text{aff}}$ mod $\bar{o}(\hbar^7)$, and as we expand the resulting Leibniz graphs to the old and possibly new Kontsevich graphs.\footnote{%
This Leibniz graph cannot originate from any Kontsevich graph in the associator itself (even with aerial vertex in\/-\/degree~$\geqslant 2$).
Namely, all candidate Kontsevich graphs are composite, with one of the factors having zero weight.}
This Leibniz graph created in the $1$st layer appears with coefficient $2/135$ in an iteratively found factorization of the associator.
The genuine Kontsevich weight of this Leibniz graph calculated by using the program \textsf{kontsevint} by E. Panzer is also nonzero: $w(L_1) = -3/128\cdot 
{\zeta(3)^2} / {\pi^6} + 31/725760$.
The actual coefficient of $L_1$ in the \emph{canonical} factorization of the associator, as guaranteed by the Formality Theorem, equals $w(L_1)$ multiplied by some nonzero rational constant. 
The discrepancy between the found rational value in \emph{some} solution and the (ir)rational value in Kontsevich's canonical solution is likely due to an identity between Leibniz graphs which expand to a zero sum of Kontsevich graphs (see \cite[Part I, \S 3.5.2]{BuringDisser}).
But anyway, based on this empiric evidence we conclude the proof.
\end{proof}

\noindent\textbf{Conclusion.}\quad 
The above iterative scheme gives us \emph{a} solution to the weak factorization problem: each tri-differential component $\mathsf{A}_{d_0 d_1 d_2}$ is factorized independently from the others, so that the coefficients of the Leibniz graphs are not yet constrained overall --\,over different components\,-- by the Shoikhet\/--\/Felder\/--\/Willwacher cyclic weight relations and other relations.
In particular, the above scheme does not guarantee that the found coefficients of Leibniz graphs are equal (up to the multiplicity and 
constants~$c_n$) to the genuine Kontsevich weights of those Leibniz graphs.
The above scheme provides the necessary minimum number of layers of Leibniz graphs, whereas the calculation of Kontsevich's genuine weights of Leibniz graphs is sufficient to build a solution (the canonical one) for the associator factorization problem.
We remember that there exist identities, i.e.\ sums of Leibniz graphs which expand to zero sums of Kontsevich graphs (here, in the associator); such identities could make unnecessary the use of a Leibniz graph with nonzero genuine weight from a (high number, in particular the last) layer.
Hypothetically it might be that any solution needs the $0$th and $1$st layers, hence they are ``necessary'', but Kontsevich's canonical solution stretches over the $0$th, $1$st and $2$nd layers, thus they are ``sufficient''.
The above scheme does not guarantee that the genuine Kontsevich weight of a Leibniz graph in the known associator's factorization at order $\hbar^7$ will definitely be equal (up to the multiplicity and 
constants $c_n$) to this Leibniz graph's coefficient in a solution found using the last necessary layer.
We conclude that the $1$st layer of Leibniz graphs becomes necessary at~$\hbar^{k\geqslant 7}$ for \emph{any} factorization of the associator (with harmonic propagators for the Kontsevich graph weights in~$\star$ in its authentic gauge from~\cite{MK97}). Such use of the $1$st and higher layers could start earlier, at orders $k<7$, for the factorization problem's canonical solution given by the Kontsevich weights $w(L)$ of Leibniz graphs.

\begin{open}
Over how many layers do the canonical 
Kontsevich solutions of associator's factorization problem stretch\,?
In particular what is the factorization guaranteed by the Formality theorem for orders $3$,\ $4$,\ $5$,\ $6$,\ $7$, which we have considered so far using solutions of the weak factorization problem\,?
\end{open}

\noindent\textbf{Acknowledgements.}\quad 
The second author is grateful to the organizers of international symposium on Quantum Theory and Symmetries (QTS12) on 24--28~July 2023 in CVUT Prague, Czech Republic.
A part of this research was done while the authors were visiting at the $\smash{\text{IH\'ES}}$ in Bures\/-\/sur\/-\/Yvette, France.
R.B.\ thanks E.\,Panzer for granting access to \textsf{kontsevint} software;
A.K.\ thanks G.\,Dito and M.~Kontsevich for helpful discussions.%
\footnote{%
The research of R.B.\ was supported by project~$5020$ at the Institute of Mathematics,
Johannes Gutenberg\/--\/Uni\-ver\-si\-t\"at Mainz and by CRC-326 grant GAUS `Geometry and Arithmetic of Uniformized Structures'.
The travel of A.K.\ was partially supported by project 135110 at the Bernoulli Institute, University of Groningen.
A.K.~is grateful to the $\smash{\text{IH\'ES}}$ for financial support and hospitality.}

This paper is extracted in part from the text~\cite{affine7} by the same authors; the authors thank colleagues and anonymous experts who acted as referees of this work.

\appendix

\section{Corrigendum: The encoding and use of Leibniz graphs}
\label{AppLeibniz}

\noindent%
Normal forms for Leibniz graphs with one
Jacobiator were introduced in~\cite[Definition~5]{JPCS2017}:
the idea was to re-use the normal form for Kontsevich graphs. Namely, the Jacobiator was expanded into the sum of three Kontsevich graphs (built of wedges), all the incoming arrows (to the top of the tripod) were formally directed to the top of the lower wedge in each Kontsevich graph, and then we found the normal forms of the resulting three Kontsevich graphs, while also
remembering where the internal edge of the Jacobiator is located in
those normal forms. The normal form of the Leibniz graph then was:
choose the minimal (w.r.t. base-\((m+n)\) numbers) Kontsevich graph
encoding, supplemented with the indication of the internal Jacobiator
edge. (Besides, it is necessary to pay attention to whether the internal Jacobiator edge is labeled Left or Right, in order to expand the Leibniz graph with the correct sign~\(\pm 1\).)

This definition, i.e.~the pair (Kontsevich graph, marked edge) is
unfortunately not a true normal form of the Leibniz graph. Namely, it
can happen that the resulting Kontsevich graph has an automorphism that
maps the marked edge elsewhere, to a new place in the graph.
Consequently, two isomorphic Leibniz graphs could have different
``normal forms'' (differing only by the marking where the internal
Jacobiator edge is). This led to a visible pathology, namely to
redundant parameters in the systems of equations: one and the same
Leibniz graph, encoded differently, acquired two unrelated coefficients.
Fortunately, the effect disappeared when Leibniz graphs were expanded to
sums of Kontsevich graphs and similar terms were collected.

In consequence, that normal form 
was abandoned in favor of inambiguous (and fast) description of Leibniz graphs by
using the \(\textsf{nauty}\) software~\cite{nauty}.

\section*{References}

\twocolumn
\section{Factorization of associator $\Assoc(\star)\text{ mod }\bar{o}(\hbar^6)$ via Leibniz graphs}
\label{AppStarAssoc6}


\noindent%
First we present the certificate of vanishing for the rational part of the associator at $\hbar^6$ via the $0$th layer of Leibniz graphs.
That is, working over the extension $\mathbb{Q}[\zeta(3)^2/\pi^6]$, we first take the purely rational part of all the coefficients of Kontsevich graphs in the associator.

\begin{notation}\label{NotationExpandContract}
For each tri-differential order (with respect to the three arguments $f,g,h$ in the associator)
actually showing up in the sum of Kontsevich's graphs under study
we list that order $(i,j,k)$ itself, the number of Kontsevich graphs with nonzero coefficients in that order,
the number of Leibniz graphs which are instantly produced by contracting one internal edge in the already available Kontsevich graphs, and the number of new Kontsevich graphs (possibly zero of them) to which the so far reached Leibniz graphs expand (when the Jacobiator is expanded). 
If the attained Leibniz graphs are enough to realize, when expanded, the initially given sum of Kontsevich graphs in that tri-differential order, the contraction\/-\/expansion stops.
If not, we repeat the iteration(s) until the initially given sum is successfully realized by the Leibniz graph expansions for the first and/or higher layers of neighbors. 
The program \textsf{gcaops} (\emph{Graph Complex Action on Poisson Structures} by R.~Buring) writes \verb"True" as soon as the fact of factorization is established.
\end{notation}

The vanishing of rational part of associator at $\hbar^6$, via $0$th layer Leibniz graphs:\\
{\tiny
\begin{verbatim}
Number of Kontsevich graphs: 290243
Number of differential orders: 105
(3, 1, 4): 449K -> +220L -> +26K
True
(2, 2, 4): 829K -> +424L -> +71K
True
(2, 1, 4): 1524K -> +780L -> +115K
True
(1, 3, 4): 443K -> +220L -> +32K
True
(1, 2, 4): 1515K -> +780L -> +124K
True
(1, 1, 4): 2315K -> +1135L -> +281K
True
(3, 2, 4): 208K -> +98L -> +17K
True
(2, 3, 4): 203K -> +98L -> +22K
True
(1, 4, 4): 75K -> +36L -> +11K
True
(4, 1, 4): 82K -> +36L -> +4K
True
(4, 2, 4): 32K -> +14L -> +7K
True
(3, 3, 4): 38K -> +16L -> +7K
True
(2, 4, 4): 32K -> +14L -> +7K
True
(4, 2, 3): 208K -> +98L -> +17K
True
(4, 1, 3): 449K -> +220L -> +26K
True
(3, 3, 3): 362K -> +175L -> +30K
True
(3, 2, 3): 1424K -> +810L -> +161K
True
(3, 1, 3): 2612K -> +1475L -> +199K
True
(2, 4, 3): 199K -> +97L -> +26K
True
(2, 3, 3): 1423K -> +810L -> +162K
True
(2, 2, 3): 4984K -> +2947L -> +451K
True
(2, 1, 3): 7702K -> +4353L -> +618K
True
(1, 4, 3): 417K -> +215L -> +58K
True
(1, 3, 3): 2583K -> +1469L -> +216K
True
(1, 2, 3): 7659K -> +4350L -> +661K
True
(1, 1, 3): 10263K -> +5295L -> +1217K
True
(4, 3, 3): 38K -> +16L -> +7K
True
(3, 4, 3): 38K -> +16L -> +7K
True
(2, 5, 3): 10K -> +5L -> +5K
True
(1, 5, 3): 36K -> +14L -> +6K
True
(5, 2, 3): 15K -> +5L -> +0K
True
(5, 1, 3): 41K -> +14L -> +1K
True
(5, 3, 3): 3K -> +1L -> +0K
True
(4, 4, 3): 3K -> +1L -> +0K
True
(3, 5, 3): 3K -> +1L -> +0K
True
(1, 5, 4): 7K -> +4L -> +5K
True
(5, 1, 4): 12K -> +4L -> +0K
True
(5, 2, 4): 3K -> +1L -> +0K
True
(4, 3, 4): 3K -> +1L -> +0K
True
(3, 4, 4): 3K -> +1L -> +0K
True
(2, 5, 4): 3K -> +1L -> +0K
True
(4, 2, 2): 829K -> +424L -> +71K
True
(4, 1, 2): 1524K -> +780L -> +115K
True
(3, 3, 2): 1423K -> +810L -> +162K
True
(3, 2, 2): 4984K -> +2947L -> +451K
True
(3, 1, 2): 7702K -> +4353L -> +618K
True
(2, 3, 2): 4779K -> +2908L -> +631K
True
(2, 2, 2): 14046K -> +8416L -> +1618K
True
(2, 1, 2): 18894K -> +10298L -> +1904K
True
(1, 4, 2): 1338K -> +752L -> +266K
True
(1, 3, 2): 7297K -> +4282L -> +956K
True
(1, 2, 2): 19000K -> +10368L -> +1796K
True
(1, 1, 2): 22789K -> +10742L -> +2227K
True
(4, 3, 2): 203K -> +98L -> +22K
True
(3, 4, 2): 199K -> +97L -> +26K
True
(2, 4, 2): 758K -> +421L -> +141K
True
(1, 5, 2): 96K -> +43L -> +33K
True
(5, 2, 2): 53K -> +18L -> +1K
True
(5, 1, 2): 121K -> +43L -> +8K
True
(5, 3, 2): 15K -> +5L -> +0K
True
(4, 4, 2): 32K -> +14L -> +7K
True
(2, 5, 2): 43K -> +18L -> +11K
True
(3, 5, 2): 10K -> +5L -> +5K
True
(4, 2, 1): 1515K -> +780L -> +124K
True
(4, 1, 1): 2315K -> +1135L -> +281K
True
(3, 3, 1): 2583K -> +1469L -> +216K
True
(3, 2, 1): 7659K -> +4350L -> +661K
True
(3, 1, 1): 10263K -> +5295L -> +1217K
True
(2, 4, 1): 1338K -> +752L -> +266K
True
(2, 3, 1): 7297K -> +4282L -> +956K
True
(2, 2, 1): 19000K -> +10368L -> +1796K
True
(2, 1, 1): 22789K -> +10742L -> +2227K
True
(1, 4, 1): 2223K -> +1135L -> +373K
True
(1, 3, 1): 10068K -> +5290L -> +1412K
True
(1, 2, 1): 22591K -> +10736L -> +2424K
True
(1, 1, 1): 23814K -> +9358L -> +2709K
True
(4, 3, 1): 443K -> +220L -> +32K
True
(3, 4, 1): 417K -> +215L -> +58K
True
(2, 5, 1): 96K -> +43L -> +33K
True
(1, 5, 1): 234K -> +81L -> +8K
True
(5, 2, 1): 121K -> +43L -> +8K
True
(5, 1, 1): 234K -> +81L -> +8K
True
(5, 3, 1): 41K -> +14L -> +1K
True
(4, 4, 1): 75K -> +36L -> +11K
True
(3, 5, 1): 36K -> +14L -> +6K
True
(5, 4, 1): 12K -> +4L -> +0K
True
(5, 4, 2): 3K -> +1L -> +0K
True
(4, 5, 2): 3K -> +1L -> +0K
True
(1, 1, 5): 234K -> +81L -> +8K
True
(5, 1, 5): 3K -> +1L -> +0K
True
(4, 2, 5): 3K -> +1L -> +0K
True
(3, 3, 5): 3K -> +1L -> +0K
True
(2, 4, 5): 3K -> +1L -> +0K
True
(1, 5, 5): 3K -> +1L -> +0K
True
(2, 1, 5): 121K -> +43L -> +8K
True
(1, 2, 5): 121K -> +43L -> +8K
True
(4, 1, 5): 12K -> +4L -> +0K
True
(3, 2, 5): 15K -> +5L -> +0K
True
(2, 3, 5): 15K -> +5L -> +0K
True
(1, 4, 5): 12K -> +4L -> +0K
True
(3, 1, 5): 41K -> +14L -> +1K
True
(2, 2, 5): 53K -> +18L -> +1K
True
(1, 3, 5): 41K -> +14L -> +1K
True
(4, 5, 1): 7K -> +4L -> +5K
True
(5, 5, 1): 3K -> +1L -> +0K
True
\end{verbatim}
}

Let us remember that in every tri-differential order we split the sums of Kontsevich graphs with coefficients from $\mathbb{Q}[\zeta(3)^2/\pi^6]$ into the part with purely rational coefficients and $\zeta(3)^2/\pi^6$ times another sum of Kontsevich graphs with purely rational coefficients.
We now inspect a realization of the $\zeta(3)^2/\pi^6$-part by using the layer(s) of Leibniz graphs which we produce from that part alone, that is, regardless of the Kontsevich graphs also available in the (rational part of the) same tri-differential order.
This is why the first layer of neighbors is (fictitiously) referred to in a factorization of the associator mod $\bar{o}(\hbar^6)$.

The vanishing of the $\zeta(3)^2/\pi^6$-part of the star\/-\/product's associator at $\hbar^6$, via $0$th and $1$st layer Leibniz graphs (see Notation on p.~\pageref{NotationExpandContract}):

{\tiny
\begin{verbatim}
Number of Kontsevich graphs: 194060
Number of differential orders: 28
(2, 1, 3): 4987K -> +3481L -> +1447K
True
(1, 1, 3): 8899K -> +5099L -> +2489K -> +648L -> +201K
True
(3, 1, 3): 732K -> +592L -> +488K
True
(2, 2, 2): 6240K -> +5047L -> +4038K
True
(3, 1, 2): 4987K -> +3481L -> +1447K
True
(2, 1, 2): 16100K -> +9665L -> +2912K -> +575L -> +84K
True
(1, 2, 2): 14200K -> +9001L -> +4579K -> +1224L -> +311K
True
(1, 1, 2): 21813K -> +10699L -> +3178K
True
(3, 2, 2): 988K -> +904L -> +844K
True
(4, 1, 2): 520K -> +392L -> +299K
True
(4, 1, 1): 1363K -> +876L -> +1061K
True
(3, 2, 1): 4173K -> +3076L -> +2051K
True
(3, 1, 1): 8899K -> +5099L -> +2489K -> +648L -> +201K
True
(2, 3, 1): 2620K -> +2084L -> +2797K
True
(2, 2, 1): 14200K -> +9001L -> +4579K -> +1224L -> +311K
True
(2, 1, 1): 21813K -> +10699L -> +3178K
True
(1, 3, 1): 5913K -> +3834L -> +4472K -> +1749L -> +1122K
True
(1, 2, 1): 20238K -> +10386L -> +4612K
True
(1, 1, 1): 23331K -> +9345L -> +3180K
True
(4, 2, 1): 520K -> +392L -> +299K
True
(3, 3, 1): 670K -> +566L -> +487K
True
(1, 1, 4): 1363K -> +876L -> +1061K
True
(1, 3, 2): 2620K -> +2084L -> +2797K
True
(1, 2, 3): 4173K -> +3076L -> +2051K
True
(2, 1, 4): 520K -> +392L -> +299K
True
(1, 2, 4): 520K -> +392L -> +299K
True
(2, 2, 3): 988K -> +904L -> +844K
True
(1, 3, 3): 670K -> +566L -> +487K
True
\end{verbatim}
}

Let us resolve an apparent violation of our earlier claim. 
In earnest, the entire associator up to $\bar{o}(h^6)$ is realized by using only the $0$th layer of Leibniz graphs.
Indeed, to represent the sum near $\zeta(3)^2/\pi^6$, let us use all the Kontsevich graphs which showed up in each tri-differential order (but maybe did not show with nonzero rational coefficients in the co-multiple of $\zeta(3)^2/\pi^6$).
We thus detect that the $0$th layer is always enough at $\hbar^6$.

The vanishing of star\/-\/product's associator at $\hbar^6$ (without splitting into rational and $\zeta(3)^2/\pi^6$ parts) at exceptional differential orders, via $0$th layer Leibniz graphs:

{\tiny
\begin{verbatim}
Number of Kontsevich graphs: 290305
(1, 3, 1): 10068K -> +5290L -> +1412K
True
(3, 1, 1): 10264K -> +5295L -> +1216K
True
(1, 1, 3): 10264K -> +5295L -> +1216K
True
(2, 2, 1): 19006K -> +10368L -> +1790K
True
(1, 2, 2): 19006K -> +10368L -> +1790K
True
(2, 1, 2): 18901K -> +10298L -> +1897K
True
\end{verbatim}
}

Hence Kontsevich's \(\star\) mod \(\bar{o}(\hbar^6)\) is associative;
its associator up to \(\bar{o}(\hbar^6)\) is a sum of Leibniz graphs
from the \(0\)th layer.

\section{Beyond the 0th layer of Leibniz graphs in factorization of $\Assoc(\star_{\text{aff}})\text{ mod }\bar{o}(\hbar^7)$}
\label{AppStarAffineAssoc7}

\noindent%
Using the same contraction\/-\/expansion method as in
~\ref{AppStarAssoc6} (now adapted to affine Kontsevich graphs with in-degree bound $\leqslant 1$ for aerial vertices and to affine Leibniz graphs with in-degree bound $\leqslant 2$ for the trident vertex and $\leqslant 1$ for all other aerial vertices)
we obtain the certificate of vanishing for the associator of $\star_{\text{aff}}$ at~$\hbar^7$.
Now, Leibniz graphs from beyond the $0$th layer must be used (and this does not stem from any splitting of the associator into the rational and $\zeta(3)^2/\pi^6$ parts).
The source files with coefficients and graph encodings are stored externally:

$\bullet$\quad The part proportional to $\hbar^7$ in the associator for $\star_{\text{aff}}$ mod $\bar{o}(\hbar^7)$, after we filter only the Kontsevich graphs with aerial vertex in-degrees $\leqslant 1$, is stored in the file
{\small\begin{center}
	\verb"https://www.rburing.nl/gcaops/"
	\verb"affine_assoc7_7.txt"
\end{center}}
We know that $49621$ Kontsevich graphs truly survive into the associator at order~$\hbar^7$ for the genuine affine Kontsevich star\/-\/product modulo $\bar{o}(\hbar^7)$ with harmonic propagators in the graph weights.

$\bullet$\quad A linear combination of Leibniz graphs with coefficients from $\mathbb{Q}[\zeta(3)^2/\pi^6]$ which suffices to realize the respective tri-differential order at~$\hbar^7$ in the associator for $\star_{\text{aff}}$ mod $\bar{o}(\hbar^7)$ is contained in the pair of files with graph encodings and coefficients respectively 
{\small\begin{center}
\end{center}}
We detect that at $\hbar^7$ the $0$th layer of Leibniz graphs is not enough to build a factorization of $\star_{\text{aff}}$'s associator. Here is a sample from the \textsf{gcaops} output, which itself is contained in the file
{\small\begin{center}
\end{center}}
\noindent and in \cite[pp.~118--120]{BuringDisser}:

{\tiny\begin{verbatim}
(5, 2, 1): 274K -> +346L -> +20K
(6, 4, 1): 18K -> +10L -> +1K
(5, 5, 1): 41K -> +30L -> +6K
(4, 6, 1): 16K -> +10L -> +3K
(2, 3, 3): 2294K -> +3584L -> +221K -> +123L -> +35K
(3, 2, 3): 2331K -> +3603L -> +191K -> +106L -> +30K
(3, 3, 2): 2294K -> +3584L -> +221K -> +123L -> +35K
(2, 4, 2): 1246K -> +2041L -> +273K -> +111L -> +23K
(4, 2, 2): 1431K -> +2111L -> +98K
(2, 2, 3): 1095K -> +1967L -> +47K
(3, 3, 1): 616K -> +1091L -> +32K
(2, 4, 1): 353K -> +636L -> +49K
\end{verbatim}

}

\section{The 1st layer of Leibniz graphs 
much used in any factorization of the associator $\Assoc(\star_{\text{aff}}^{\text{red}})\text{ mod }\bar{o}(\hbar^7)$ for the reduced affine star\/-\/product}
\label{AppStarAffineReducedAssoc7}

\noindent%
Let us inspect how the associator of $\star_{\text{aff}}^{\text{red}}$ mod $\bar{o}(\hbar^7)$ is realized by sums of Leibniz graphs; we proceed along the powers of the deformation parameter $\hbar$.
In every term, we run over the tri-differential orders where Kontsevich graphs show up with nonzero coefficients from $\mathbb{Q}$.
We see that the first layer of Leibniz graphs is much used (for the first time it is needed in the order $(1,2,1)$ at $\hbar^4$).
At the same time, the first layer is sufficient to build all the factorizations up to~$\bar{o}(\hbar^7)$.


{\tiny
\begin{verbatim}
h^2:
Number of differential orders: 1
(1, 1, 1): 3K -> +1L -> +0K
True
h^3:
Number of differential orders: 7
(1, 1, 2): 4K -> +3L -> +2K
True
(2, 1, 2): 3K -> +1L -> +0K
True
(1, 2, 2): 3K -> +1L -> +0K
True
(1, 1, 1): 2K -> +2L -> +2K
True
(2, 2, 1): 3K -> +1L -> +0K
True
(1, 2, 1): 4K -> +3L -> +2K
True
(2, 1, 1): 4K -> +3L -> +2K
True
h^4:
Number of differential orders: 22
(1, 1, 2): 4K -> +6L -> +5K
True
(1, 1, 3): 8K -> +6L -> +2K
True
(2, 1, 2): 18K -> +17L -> +8K
True
(1, 2, 2): 18K -> +18L -> +8K
True
(2, 1, 3): 7K -> +4L -> +2K
True
(1, 2, 3): 7K -> +4L -> +2K
True
(3, 1, 3): 3K -> +1L -> +0K
True
(2, 2, 3): 3K -> +1L -> +0K
True
(1, 3, 3): 3K -> +1L -> +0K
True
(3, 1, 2): 7K -> +4L -> +2K
True
(2, 2, 2): 16K -> +12L -> +8K
True
(1, 3, 2): 7K -> +4L -> +2K
True
(3, 1, 1): 8K -> +6L -> +2K
True
(2, 2, 1): 20K -> +18L -> +6K
True
(1, 3, 1): 9K -> +6L -> +1K
True
(2, 1, 1): 4K -> +6L -> +5K
True
(3, 2, 2): 3K -> +1L -> +0K
True
(2, 3, 2): 3K -> +1L -> +0K
True
(1, 2, 1): 2K -> +2L -> +4K -> +8L -> +6K
True
(2, 3, 1): 7K -> +4L -> +2K
True
(3, 2, 1): 7K -> +4L -> +2K
True
(3, 3, 1): 3K -> +1L -> +0K
True
h^5:
Number of differential orders: 50
(2, 1, 2): 8K -> +18L -> +18K
True
(1, 2, 2): 12K -> +30L -> +23K
True
(3, 1, 2): 40K -> +58L -> +35K
True
(2, 2, 2): 77K -> +110L -> +58K -> +37L -> +19K
True
(2, 1, 3): 40K -> +55L -> +33K
True
(1, 2, 3): 43K -> +61L -> +30K
True
(2, 2, 3): 65K -> +59L -> +23K
True
(3, 2, 2): 71K -> +62L -> +21K
True
(2, 3, 2): 68K -> +60L -> +20K
True
(1, 3, 2): 36K -> +50L -> +25K -> +11L -> +8K
True
(3, 1, 3): 31K -> +28L -> +13K
True
(1, 3, 3): 29K -> +28L -> +15K
True
(3, 2, 3): 22K -> +14L -> +8K
True
(2, 3, 3): 22K -> +14L -> +8K
True
(3, 1, 4): 7K -> +4L -> +2K
True
(2, 2, 4): 10K -> +5L -> +2K
True
(1, 3, 4): 7K -> +4L -> +2K
True
(4, 2, 3): 3K -> +1L -> +0K
True
(3, 3, 3): 3K -> +1L -> +0K
True
(4, 1, 3): 7K -> +4L -> +2K
True
(2, 4, 3): 3K -> +1L -> +0K
True
(1, 4, 3): 7K -> +4L -> +2K
True
(4, 1, 4): 3K -> +1L -> +0K
True
(3, 2, 4): 3K -> +1L -> +0K
True
(2, 3, 4): 3K -> +1L -> +0K
True
(1, 4, 4): 3K -> +1L -> +0K
True
(2, 1, 4): 14K -> +10L -> +4K
True
(1, 2, 4): 14K -> +10L -> +4K
True
(4, 2, 2): 10K -> +5L -> +2K
True
(3, 3, 2): 22K -> +14L -> +8K
True
(2, 4, 2): 10K -> +5L -> +2K
True
(4, 1, 2): 12K -> +9L -> +4K
True
(1, 4, 2): 13K -> +9L -> +3K
True
(1, 1, 4): 10K -> +9L -> +5K -> +1L -> +0K
True
(4, 2, 1): 12K -> +9L -> +4K
True
(4, 1, 1): 10K -> +9L -> +5K -> +1L -> +0K
True
(3, 3, 1): 33K -> +29L -> +11K
True
(3, 2, 1): 44K -> +61L -> +31K
True
(2, 4, 1): 13K -> +9L -> +3K
True
(2, 3, 1): 35K -> +50L -> +26K -> +11L -> +8K
True
(1, 4, 1): 8K -> +10L -> +7K
True
(1, 1, 3): 4K -> +10L -> +10K
True
(3, 1, 1): 4K -> +10L -> +10K
True
(2, 2, 1): 12K -> +30L -> +23K
True
(1, 3, 1): 6K -> +16L -> +12K
True
(4, 3, 2): 3K -> +1L -> +0K
True
(3, 4, 2): 3K -> +1L -> +0K
True
(3, 4, 1): 7K -> +4L -> +2K
True
(4, 3, 1): 7K -> +4L -> +2K
True
(4, 4, 1): 3K -> +1L -> +0K
True
h^6:
Number of differential orders: 95
(2, 2, 2): 46K -> +111L -> +104K -> +202L -> +130K
True
(1, 3, 2): 32K -> +80L -> +57K -> +96L -> +60K
True
(3, 1, 2): 27K -> +66L -> +74K -> +135L -> +66K
True
(3, 2, 2): 216K -> +423L -> +259K -> +205L -> +76K
True
(2, 3, 2): 225K -> +448L -> +239K
True
(4, 1, 2): 64K -> +109L -> +74K -> +53L -> +25K
True
(4, 2, 2): 164K -> +201L -> +90K -> +19L -> +10K
True
(3, 3, 2): 268K -> +362L -> +165K -> +55L -> +23K
True
(2, 2, 3): 219K -> +422L -> +251K -> +205L -> +83K
True
(1, 3, 3): 136K -> +246L -> +121K -> +73L -> +33K
True
(3, 1, 3): 113K -> +203L -> +141K -> +112L -> +36K
True
(3, 2, 3): 266K -> +358L -> +171K -> +57L -> +18K
True
(2, 3, 3): 270K -> +357L -> +160K -> +59L -> +26K
True
(4, 1, 3): 87K -> +106L -> +56K -> +18L -> +8K
True
(4, 2, 3): 99K -> +83L -> +30K
True
(3, 3, 3): 169K -> +139L -> +44K
True
(2, 4, 2): 147K -> +182L -> +78K -> +18L -> +11K
True
(1, 4, 2): 76K -> +126L -> +58K -> +32L -> +20K
True
(4, 3, 2): 99K -> +83L -> +30K
True
(3, 4, 2): 92K -> +78L -> +29K
True
(5, 1, 2): 24K -> +18L -> +8K -> +1L -> +0K
True
(5, 2, 2): 25K -> +17L -> +8K
True
(1, 4, 3): 74K -> +88L -> +41K -> +17L -> +10K
True
(2, 4, 3): 90K -> +77L -> +31K
True
(5, 2, 3): 14K -> +7L -> +2K
True
(4, 3, 3): 28K -> +16L -> +8K
True
(3, 4, 3): 28K -> +16L -> +8K
True
(3, 2, 4): 97K -> +81L -> +30K
True
(2, 3, 4): 95K -> +80L -> +32K
True
(1, 4, 4): 36K -> +31L -> +15K
True
(4, 1, 4): 41K -> +33L -> +14K
True
(4, 2, 4): 22K -> +14L -> +8K
True
(3, 3, 4): 28K -> +16L -> +8K
True
(2, 4, 4): 22K -> +14L -> +8K
True
(2, 5, 3): 10K -> +5L -> +2K
True
(1, 5, 3): 16K -> +10L -> +3K
True
(5, 1, 3): 18K -> +12L -> +5K
True
(5, 3, 3): 3K -> +1L -> +0K
True
(4, 4, 3): 3K -> +1L -> +0K
True
(3, 5, 3): 3K -> +1L -> +0K
True
(1, 5, 4): 7K -> +4L -> +2K
True
(5, 2, 4): 3K -> +1L -> +0K
True
(4, 3, 4): 3K -> +1L -> +0K
True
(3, 4, 4): 3K -> +1L -> +0K
True
(2, 5, 4): 3K -> +1L -> +0K
True
(2, 2, 4): 167K -> +202L -> +87K -> +18L -> +10K
True
(1, 3, 4): 91K -> +114L -> +52K
True
(3, 1, 4): 88K -> +106L -> +55K -> +18L -> +8K
True
(2, 5, 2): 20K -> +13L -> +5K
True
(1, 5, 2): 20K -> +17L -> +8K
True
(5, 3, 2): 14K -> +7L -> +2K
True
(4, 4, 2): 22K -> +14L -> +8K
True
(3, 5, 2): 10K -> +5L -> +2K
True
(1, 2, 4): 80K -> +129L -> +62K
True
(2, 1, 4): 64K -> +109L -> +74K -> +53L -> +25K
True
(4, 2, 1): 80K -> +129L -> +62K
True
(3, 3, 1): 134K -> +244L -> +125K -> +75L -> +30K
True
(2, 4, 1): 76K -> +126L -> +58K -> +32L -> +20K
True
(4, 3, 1): 91K -> +113L -> +52K
True
(3, 4, 1): 75K -> +92L -> +42K -> +13L -> +8K
True
(2, 5, 1): 20K -> +17L -> +8K
True
(1, 5, 1): 18K -> +15L -> +3K
True
(5, 2, 1): 24K -> +18L -> +8K -> +1L -> +0K
True
(5, 1, 1): 14K -> +15L -> +7K
True
(5, 3, 1): 18K -> +12L -> +5K
True
(4, 4, 1): 39K -> +32L -> +12K
True
(3, 5, 1): 16K -> +10L -> +3K
True
(1, 2, 3): 38K -> +92L -> +74K -> +112L -> +58K
True
(2, 1, 3): 28K -> +68L -> +75K -> +134L -> +65K
True
(3, 2, 1): 38K -> +93L -> +71K -> +109L -> +57K
True
(2, 3, 1): 32K -> +80L -> +57K -> +96L -> +60K
True
(1, 4, 1): 8K -> +18L -> +18K -> +34L -> +22K
True
(4, 1, 1): 12K -> +25L -> +19K
True
(5, 4, 2): 3K -> +1L -> +0K
True
(4, 5, 2): 3K -> +1L -> +0K
True
(1, 1, 4): 12K -> +25L -> +19K
True
(4, 1, 5): 9K -> +5L -> +2K
True
(3, 2, 5): 14K -> +7L -> +2K
True
(2, 3, 5): 14K -> +7L -> +2K
True
(1, 4, 5): 9K -> +5L -> +2K
True
(3, 1, 5): 20K -> +13L -> +5K
True
(2, 2, 5): 29K -> +19L -> +8K
True
(1, 3, 5): 20K -> +13L -> +5K
True
(2, 1, 5): 24K -> +18L -> +8K -> +1L -> +0K
True
(1, 2, 5): 24K -> +18L -> +8K -> +1L -> +0K
True
(1, 1, 5): 14K -> +15L -> +7K
True
(5, 1, 5): 3K -> +1L -> +0K
True
(4, 2, 5): 3K -> +1L -> +0K
True
(3, 3, 5): 3K -> +1L -> +0K
True
(2, 4, 5): 3K -> +1L -> +0K
True
(1, 5, 5): 3K -> +1L -> +0K
True
(5, 1, 4): 9K -> +5L -> +2K
True
(4, 5, 1): 7K -> +4L -> +2K
True
(5, 4, 1): 9K -> +5L -> +2K
True
(5, 5, 1): 3K -> +1L -> +0K
True
h^7:
Number of differential orders: 161
(3, 2, 2): 87K -> +314L -> +353K -> +915L -> +533K
True
(2, 3, 2): 107K -> +372L -> +341K -> +835L -> +468K
True
(3, 3, 2): 762K -> +1930L -> +1258K -> +1463L -> +475K
True
(2, 4, 2): 509K -> +1243L -> +699K -> +669L -> +269K
True
(4, 2, 2): 455K -> +1114L -> +766K -> +836L -> +295K
True
(4, 3, 2): 777K -> +1378L -> +672K -> +361L -> +125K
True
(3, 4, 2): 799K -> +1399L -> +640K -> +330L -> +113K
True
(5, 2, 2): 288K -> +438L -> +236K -> +109L -> +40K
True
(5, 3, 2): 264K -> +296L -> +133K -> +22L -> +5K
True
(4, 4, 2): 411K -> +506L -> +214K -> +61L -> +26K
True
(3, 2, 3): 638K -> +1612L -> +1293K -> +1732L -> +570K
True
(2, 3, 3): 760K -> +1919L -> +1251K -> +1457L -> +494K
True
(4, 2, 3): 736K -> +1333L -> +722K -> +421L -> +130K
True
(3, 3, 3): 1252K -> +2303L -> +1110K -> +658L -> +205K
True
(2, 4, 3): 795K -> +1392L -> +636K -> +336L -> +120K
True
(4, 3, 3): 632K -> +785L -> +334K -> +88L -> +30K
True
(3, 4, 3): 629K -> +781L -> +326K -> +86L -> +31K
True
(5, 2, 3): 262K -> +295L -> +135K -> +23L -> +5K
True
(5, 3, 3): 135K -> +107L -> +42K
True
(4, 4, 3): 208K -> +167L -> +57K
True
(3, 5, 2): 231K -> +259L -> +105K -> +21L -> +11K
True
(2, 5, 2): 297K -> +447L -> +196K -> +71L -> +33K
True
(5, 4, 2): 107K -> +86L -> +32K
True
(4, 5, 2): 101K -> +82L -> +30K
True
(6, 2, 2): 42K -> +30L -> +13K -> +1L -> +0K
True
(6, 3, 2): 28K -> +18L -> +8K
True
(2, 5, 3): 227K -> +254L -> +107K -> +26L -> +13K
True
(3, 5, 3): 117K -> +96L -> +40K
True
(5, 4, 3): 32K -> +18L -> +8K
True
(4, 5, 3): 28K -> +16L -> +8K
True
(4, 2, 4): 402K -> +494L -> +219K -> +63L -> +20K
True
(3, 3, 4): 641K -> +791L -> +330K -> +85L -> +29K
True
(2, 4, 4): 408K -> +499L -> +212K -> +68L -> +31K
True
(5, 2, 4): 103K -> +83L -> +32K
True
(4, 3, 4): 214K -> +169L -> +57K
True
(3, 4, 4): 210K -> +167L -> +57K
True
(2, 5, 4): 100K -> +81L -> +31K
True
(5, 3, 4): 32K -> +18L -> +8K
True
(4, 4, 4): 34K -> +18L -> +8K
True
(3, 5, 4): 28K -> +16L -> +8K
True
(3, 6, 3): 10K -> +5L -> +2K
True
(2, 6, 3): 23K -> +14L -> +5K
True
(6, 3, 3): 10K -> +5L -> +2K
True
(6, 2, 3): 28K -> +18L -> +8K
True
(6, 4, 3): 3K -> +1L -> +0K
True
(5, 5, 3): 3K -> +1L -> +0K
True
(4, 6, 3): 3K -> +1L -> +0K
True
(2, 6, 4): 10K -> +5L -> +2K
True
(6, 2, 4): 10K -> +5L -> +2K
True
(6, 3, 4): 3K -> +1L -> +0K
True
(5, 4, 4): 3K -> +1L -> +0K
True
(4, 5, 4): 3K -> +1L -> +0K
True
(3, 6, 4): 3K -> +1L -> +0K
True
(3, 2, 4): 733K -> +1340L -> +727K -> +411L -> +127K
True
(2, 3, 4): 779K -> +1389L -> +675K -> +352L -> +122K
True
(3, 6, 2): 23K -> +14L -> +5K
True
(2, 6, 2): 36K -> +27L -> +11K
True
(6, 4, 2): 10K -> +5L -> +2K
True
(5, 5, 2): 24K -> +15L -> +8K
True
(4, 6, 2): 10K -> +5L -> +2K
True
(2, 2, 4): 456K -> +1117L -> +772K -> +848L -> +289K
True
(5, 3, 1): 160K -> +243L -> +144K -> +78L -> +36K
True
(5, 2, 1): 106K -> +224L -> +152K -> +124L -> +58K
True
(4, 4, 1): 259K -> +452L -> +238K -> +124L -> +48K
True
(4, 3, 1): 289K -> +712L -> +422K -> +353L -> +134K
True
(3, 5, 1): 158K -> +240L -> +122K -> +52L -> +23K
True
(3, 4, 1): 280K -> +681L -> +392K -> +359L -> +142K
True
(2, 5, 1): 107K -> +226L -> +136K -> +94L -> +42K
True
(5, 4, 1): 114K -> +133L -> +63K
True
(4, 5, 1): 86K -> +99L -> +50K -> +17L -> +7K
True
(3, 6, 1): 24K -> +20L -> +10K
True
(2, 6, 1): 30K -> +28L -> +12K
True
(6, 3, 1): 28K -> +21L -> +10K -> +1L -> +0K
True
(6, 2, 1): 34K -> +32L -> +16K -> +2L -> +1K
True
(6, 4, 1): 15K -> +10L -> +4K
True
(5, 5, 1): 35K -> +30L -> +12K
True
(4, 6, 1): 16K -> +10L -> +3K
True
(2, 2, 3): 87K -> +318L -> +361K -> +922L -> +510K
True
(4, 2, 1): 42K -> +139L -> +115K -> +221L -> +128K
True
(3, 3, 1): 78K -> +276L -> +227K -> +453L -> +240K
True
(2, 4, 1): 49K -> +164L -> +125K -> +230L -> +135K
True
(1, 5, 2): 111K -> +233L -> +134K -> +87L -> +40K
True
(5, 1, 2): 94K -> +192L -> +162K -> +157L -> +60K
True
(1, 5, 3): 156K -> +237L -> +123K -> +55L -> +24K
True
(5, 1, 3): 147K -> +232L -> +160K -> +94L -> +42K
True
(6, 5, 2): 3K -> +1L -> +0K
True
(5, 6, 2): 3K -> +1L -> +0K
True
(1, 6, 2): 30K -> +28L -> +12K
True
(6, 1, 2): 34K -> +32L -> +16K -> +2L -> +1K
True
(1, 6, 3): 24K -> +20L -> +10K
True
(6, 1, 3): 28K -> +21L -> +10K -> +1L -> +0K
True
(3, 1, 4): 204K -> +481L -> +461K -> +600L -> +201K
True
(1, 3, 4): 287K -> +703L -> +423K -> +368L -> +135K
True
(4, 1, 5): 111K -> +127L -> +67K -> +15L -> +3K
True
(3, 2, 5): 261K -> +296L -> +135K -> +21L -> +4K
True
(2, 3, 5): 263K -> +297L -> +133K -> +20L -> +4K
True
(1, 4, 5): 113K -> +132L -> +65K
True
(3, 1, 5): 145K -> +231L -> +160K -> +94L -> +42K
True
(2, 2, 5): 296K -> +444L -> +234K -> +105L -> +38K
True
(1, 3, 5): 159K -> +243L -> +145K -> +77L -> +34K
True
(4, 1, 4): 231K -> +410L -> +265K -> +177L -> +72K
True
(1, 4, 4): 258K -> +451L -> +239K -> +125L -> +48K
True
(2, 1, 5): 93K -> +191L -> +162K -> +152L -> +55K
True
(1, 2, 5): 104K -> +223L -> +149K -> +119L -> +57K
True
(4, 1, 3): 202K -> +470L -> +456K -> +601L -> +205K
True
(1, 4, 3): 283K -> +693L -> +396K -> +355L -> +143K
True
(5, 1, 6): 9K -> +5L -> +2K
True
(4, 2, 6): 10K -> +5L -> +2K
True
(3, 3, 6): 10K -> +5L -> +2K
True
(2, 4, 6): 10K -> +5L -> +2K
True
(1, 5, 6): 9K -> +5L -> +2K
True
(4, 1, 6): 17K -> +11L -> +4K
True
(3, 2, 6): 32K -> +20L -> +8K
True
(2, 3, 6): 32K -> +20L -> +8K
True
(1, 4, 6): 17K -> +11L -> +4K
True
(5, 1, 5): 35K -> +29L -> +12K
True
(4, 2, 5): 104K -> +83L -> +29K
True
(3, 3, 5): 135K -> +108L -> +42K
True
(2, 4, 5): 107K -> +86L -> +30K
True
(1, 5, 5): 36K -> +30L -> +11K
True
(3, 1, 6): 28K -> +21L -> +10K -> +1L -> +0K
True
(2, 2, 6): 46K -> +32L -> +13K -> +1L -> +0K
True
(1, 3, 6): 28K -> +21L -> +10K -> +1L -> +0K
True
(5, 1, 4): 111K -> +127L -> +66K -> +14L -> +2K
True
(1, 5, 4): 86K -> +98L -> +48K -> +18L -> +9K
True
(2, 1, 6): 36K -> +33L -> +16K -> +2L -> +1K
True
(1, 2, 6): 36K -> +33L -> +16K -> +2L -> +1K
True
(1, 1, 6): 16K -> +18L -> +12K -> +3L -> +0K
True
(6, 1, 6): 3K -> +1L -> +0K
True
(5, 2, 6): 3K -> +1L -> +0K
True
(4, 3, 6): 3K -> +1L -> +0K
True
(3, 4, 6): 3K -> +1L -> +0K
True
(2, 5, 6): 3K -> +1L -> +0K
True
(1, 6, 6): 3K -> +1L -> +0K
True
(6, 1, 5): 9K -> +5L -> +2K
True
(5, 2, 5): 22K -> +14L -> +8K
True
(4, 3, 5): 32K -> +18L -> +8K
True
(3, 4, 5): 32K -> +18L -> +8K
True
(2, 5, 5): 24K -> +15L -> +8K
True
(1, 6, 5): 7K -> +4L -> +2K
True
(6, 1, 4): 15K -> +10L -> +4K
True
(1, 6, 4): 16K -> +10L -> +3K
True
(3, 1, 3): 32K -> +116L -> +161K -> +422L -> +272K
True
(1, 3, 3): 78K -> +277L -> +225K -> +442L -> +240K
True
(2, 1, 4): 21K -> +65L -> +72K -> +186L -> +137K
True
(1, 2, 4): 42K -> +139L -> +113K -> +219L -> +128K
True
(4, 1, 2): 21K -> +65L -> +72K -> +186L -> +137K
True
(1, 4, 2): 49K -> +164L -> +125K -> +226L -> +132K
True
(6, 2, 5): 3K -> +1L -> +0K
True
(5, 3, 5): 3K -> +1L -> +0K
True
(4, 4, 5): 3K -> +1L -> +0K
True
(3, 5, 5): 3K -> +1L -> +0K
True
(2, 6, 5): 3K -> +1L -> +0K
True
(6, 1, 1): 16K -> +18L -> +12K -> +3L -> +0K
True
(1, 6, 1): 12K -> +18L -> +15K
True
(1, 1, 5): 6K -> +16L -> +16K
True
(5, 1, 1): 6K -> +16L -> +16K
True
(1, 5, 1): 10K -> +28L -> +20K
True
(6, 6, 1): 3K -> +1L -> +0K
True
(5, 6, 1): 7K -> +4L -> +2K
True
(6, 5, 1): 9K -> +5L -> +2K
True
\end{verbatim}
}

Summarizing, the Kontsevich graph encoding of the reduced affine star\/-\/product $\star_{\text{aff}}^{\text{red}}\text{ mod }\bar{o}(\hbar^7)$ is given 
in~\cite[Appendix~A.2]{affine7}
; its analytic formula is available 
from~\cite[Appendix~B]{affine7}
, and in this
~\ref{AppStarAffineReducedAssoc7} we have certified its associativity up to $\bar{o}(\hbar^7)$.

\end{document}